\newtheorem*{uthm}{Theorem}
\newtheorem{thm}{Theorem}
\newtheorem{lem}{Lemma}[section]
\newtheorem{obs}[lem]{Observation}
\newtheorem{propos}[lem]{Proposition}
\newtheorem{defin}[lem]{Definition}
\newtheorem{conjecture}{Conjecture}
\newcommand{\degr}{\text{deg}}
\newcommand{\QR}{\text{QR}}
\newcommand{\R}{\mathbb{R}}
\newcommand{\C}{\mathbb{C}}
\newcommand{\N}{\mathbb{N}}
\newcommand{\cP}{\mathcal P}
\newcommand{\M}{\mathcal{M}}
\newcommand{\T}{\mathbb{T}}
\newcommand{\ty}{\text{Ty}}
\newcommand{\Z}{\mathbb{Z}}
\newcommand{\ind}{\mathbbm{1}}
\DeclareMathOperator{\dn}{dn}
\title{Drawing outerplanar graphs using thirteen edge lengths}
\author{Ziv Bakhajian\thanks{Israel Arts and Science Academy, Chaim E. Kolitz rd., Jerusalem, Israel. Email: zivbakhajian@gmail.com}
\and Ohad N. Feldheim \thanks{Einstein Institute of Mathematics,
The Hebrew University of Jerusalem,
Givat Ram, Jerusalem, Israel. Email: ohad.feldheim@mail.huji.ac.il}}
\begin{document}
\maketitle

\begin{abstract}
We show that every outerplanar graph $G$ can be linearly embedded in the plane
such that the number of distinct distances between pairs of adjacent vertices is at most thirteen and there is no intersection between
the image of a vertex and that of an edge not containing it.

This extends the work of Alon and the second author, where only overlap between vertices was disallowed, thus settling a problem posed by Carmi, Dujmovi\'{c}, Morin and Wood.
\end{abstract}
{\bf Keywords:} Outerplanar graphs, planar graphs, distance number of a graph,
drawing of a graph.

\section{Introduction}

The subject of this paper are \emph{linear embedding}s of a graph $G=(V,E)$. Such an embedding is a map $\psi:V\to \C$, where we view the open interval $(\psi(u),\psi(v))$ as the image of the edge $(u,v)\in E$ through $\psi$.
The length of this interval is called the \emph{edge length} of $(u,v)$ in the embedding $\psi$.

A \emph{degenerate drawing} of a graph $G$ is a linear embedding in which the
images of all vertices are distinct. A \emph{drawing} of $G$ is a degenerate
drawing in which the image of every edge is disjoint from the image of every
vertex. Observe that in both cases edges are allowed to cross each other so that the embedding is not planar (see Figure~\ref{fig: T star}).
The  \emph{(degenerate) distance number} of a graph is the minimum number of distinct edge lengths in a (degenerate) drawing of $G$.

An \emph{outerplanar} graph is a graph that can be embedded in the plane
without edge crossings so that all vertices lie on the boundary of
the unbounded face of the embedding. In \cite{Pvpd}, Carmi, Dujmovi\'{c},
Morin and Wood ask whether the distance number of outerplanar graphs is
uniformly bounded. We answer this question to the affirmative, showing that the distance number of outerplanar graphs is at most $13$. 

\begin{thm}\label{thm: 13 degenerate}
Every outerplanar graph has distance number at most $13$.
\end{thm}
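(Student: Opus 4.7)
The plan is to reduce to the case of maximal outerplanar graphs (polygon triangulations), since the distance number is monotone under taking subgraphs: any valid drawing of $G$ restricts to a valid drawing of any subgraph of $G$. A maximal outerplanar graph admits a decomposition via its weak dual, which is a tree: its triangular faces can be ordered $T_1, T_2, \ldots, T_{n-2}$ so that each $T_i$ with $i \geq 2$ shares precisely one edge with $T_1 \cup \cdots \cup T_{i-1}$ and introduces exactly one new vertex. I would use this ordering to build the drawing incrementally.

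Fix a target set $L$ of $13$ intended edge lengths, chosen so as to satisfy some genericity condition ruling out accidental algebraic coincidences. Begin by placing the initial edge of $T_1$ with length in $L$. At step $i$, add the new vertex $w_i$ completing $T_i$ along its shared base edge $(u_i, v_i)$. The constraint that $|u_i w_i|, |v_i w_i| \in L$ restricts $w_i$ to at most $2|L|^2 = 338$ candidate positions, namely the intersections of pairs of circles of radii in $L$ centred at $u_i$ and $v_i$. For the chosen candidate to yield a valid drawing it must, in addition, (a) not coincide with any previously placed vertex, (b) not lie on any previously drawn edge, and (c) produce new edges $u_i w_i$ and $v_i w_i$ passing through no previously placed vertex.

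The crux of the argument, and the main obstacle, is establishing that some candidate always satisfies (a)--(c). This is precisely the feature distinguishing this paper from Alon and Feldheim's earlier work on degenerate drawings, where only (a) had to be verified. A naive union bound fails because the number of previously placed vertices and edges grows with $n$, while the candidate pool is of bounded size; blocking effects therefore have to be controlled by global, rather than local, arguments. I would expect the proof to proceed by starting from a construction along the lines of Alon--Feldheim that uses a small \emph{core} length set $L_0$ and already guarantees (a), then analysing which geometric configurations in that construction give rise to vertex-edge incidences violating (b) or (c), and showing that each such incidence can be locally repaired by substituting a different length pattern drawn from $L \setminus L_0$. The hard part is exhibiting a repair procedure that is always available at every step, that does not introduce new incidences elsewhere as the drawing grows, and that lets the total budget $|L_0| + |L \setminus L_0|$ be kept down to $13$.
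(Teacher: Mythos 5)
There is a genuine gap, and it sits exactly where you locate it yourself: you never establish that at each step some candidate position for $w_i$ satisfies conditions (a)--(c), you only describe what you ``would expect'' a proof to do. The repair strategy you sketch is unlikely to be salvageable in the incremental form you propose. In any construction of this type the position of $w_i$ is determined by the entire history of earlier choices, so substituting a different length pattern at step $i$ to fix a vertex-edge incidence displaces every vertex added afterwards; a ``local'' repair is therefore global in its effects, and you have no mechanism to prevent it from creating new incidences downstream. Worse, the candidate pool at each step has bounded size ($\le 338$) while the set of constraints to avoid grows with $n$, and since outerplanar graphs are not of bounded degree, a single base edge may have unboundedly many triangles glued to it, so one cannot even guarantee by counting that the finitely many candidates are not all blocked. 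Nothing in your write-up addresses this.

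The paper escapes this trap by abandoning the discrete, step-by-step viewpoint entirely. It constructs a single global embedding $\psi$ of the universal infinite triangulation $T^*$, assigning to each vertex a polynomial in $12$ variables $(x_i)_{i\in I}$ ranging over the unit torus $\T^{12}$; each pair of adjacent triangles is drawn as a unit rhombus whose shape is governed by one coordinate $x_{\ty}$, chosen according to a carefully designed ``type'' function. Vertex-vertex coincidences are ruled out for almost every parameter because distinct vertices get distinct polynomials, and vertex-edge incidences are ruled out for almost every parameter by showing that the relevant rational function $\frac{\psi_x(v)-\psi_x(w)}{\psi_x(u)-\psi_x(w)}$ can only be real on a positive-measure subset of the torus if it is a ``central palindromic'' Laurent polynomial, which the type assignment precludes except in harmless cases. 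The measure-theoretic framing replaces your counting/repair argument with a zero-measure bad set, which is the idea your proposal is missing. If you want to pursue your route, you would need to articulate and prove the availability of a valid candidate at every step, and that is essentially as hard as the theorem itself.
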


This we obtain by providing an explicit drawing of the universal infinite outerplanar graph (See Definition~\ref{def:inftree} below),
for generic edges-lengths, subject to certain triangle inequalities.

\section{Background and Motivation}

The notions of distance number and degenerate distance number of a graph
were introduced by Carmi, Dujmović, Morin and Wood in
\cite{Pvpd} in order to generalize several well studied problems. 
These include the problem of estimating the minimum possible number of distinct distances between $n$ points in the plane (see \cite{Erd}, \cite{GK}) and
the minimum number of distances between $n$ non-co-linear points in the plane (\cite[Theorem 13.7]{Smz}).

The notion of degenerate distance number also generalizes the notion of a unit distance graph (equivalent to degenerate distance number 1), while distance number puts an additional constraint that vertices and edges do not overlap. For additional discussion concerning these problems and their relation to distance numbers, see \cite{AF,Pvpd}.

After introducing the notions of distance number and degenerate distance number, Carmi, Dujmović, Morin and Wood
studied in \cite{Pvpd} the behavior of bounded degree graphs with respect to these notions. They
show that the degenerate distance number of certain graphs with maximum degree five can be arbitrarily large, giving a polynomial lower-bound
for other graphs whose degree is at most seven. They also provide a $c
\log(n)$ upper-bound on the distance number of bounded degree graphs with
bounded treewidth (and in particular of outerplanar graphs). There, the authors raised the question as to whether the (degenerate) distance number of outerplanar graphs is uniformly bounded. Alon and the second author~\cite{AF} gave a partial answer by proving that outerplanar graphs have degenerate distance number at most $3$, however they observe that their construction does not result in a drawing.  Here we extend their result showing that outerplanar graphs have distance number at most $13$. The question of finding the exact maximum distance number of outerplanar graphs remains open.

\section{Proof outline}

Our proof of Theorem~\ref{thm: 13 degenerate} is obtained in four steps, whose rough outline we provide here.

\textbf{Reduction to drawing the rhombus tree.} Firstly, we recall that outerplanar graphs are all sub-graphs of a
universal cover $T^*$ called the (binary) \emph{triangle tree}, generated by starting from a single base triangle with a marked base edge and iteratively gluing additional triangles along any unglued side of a triangle except for the marked base edge, ad infinitum.
We observe that this graph could also be represented as a trinary tree $H^*$ whose nodes are copies of a rhombus $H$, where two nodes are connected if they are glued to each other (see Figure~\ref{fig: T star}).
Denoting $\pi:H^*\times H\to T^*$ for the natural mapping of $H^*$ to $T^*$, Theorem~\ref{thm: 13 degenerate} reduces to showing that the images of every edge of $H$ through $\pi((H^*,H))$ is of one of $13$ distances.

\textbf{Encoding the rhombi.} To describe each rhombus of $H^*$, we encode it using the sequence of descending steps taken to reach it from the root rhombus. These correspond to \emph{left (right) steps}, where we move to a rhombus glued along the edge to the left (right) of the base edge and \emph{forward steps}, where we move to a rhombus glued along the edge parallel to the base edge. These steps are depicted in Figure~\ref{fig: T star}.
As every vertex in $T^*$  appears in infinitely many rhombi (i.e., images of nodes of $H^*$), we use a canonical one-to-one mapping between vertices and a subset of the nodes, introduced in \cite{AF} (see Observation~\ref{obs: properties of rho-q}).

\textbf{Polynomial embedding.} We define a polynomial mapping $\psi$ which maps every vertex of $T^*=\pi(H^*)$ to a polynomial with integer coefficients in $\C[(x_i)_{i\in I}]$ where $|I|=12$. For every choice of parameters $(x_i)_{i\in I}$ in the unit circle, our mapping $\psi$ reduces to a mapping $\psi_x$ from $T^*$ to the complex plane, such that
every rhombus in $H^*$ is mapped through $\psi_x\circ\pi$ to a rhombus in $\C$ with side length $1$ and diagonal $|x_i-1|$. We call $i$ the \emph{type} of the rhombus.
It is our purpose to show that for almost every choice of $(x_i)_{i\in I}$ in the unit circle, the mapping  $\psi_x$ is a drawing, i.e., the images
of vertices of $T^*$  through $\psi_x$ do not coincide nor do they intersect with the image of any edge.

\textbf{Avoiding intersections.}  
In order to show that image of distinct vertices do not coincide for almost every $x$, it suffices to show that the corresponding polynomials do not coincide. This is due to the fact that any two polynomials in $x$ that do not coincide take distinct values for almost every $x$ on the unit circle.
Showing that a vertex and an edge cannot coincide is more challenging, and is the main innovation of the paper. We show that if a vertex $v$ lies on the line extending the image of an edge $(u,w)$ for more than a zero measure set of assignments -- then $\frac{\psi_x(v)-\psi_x(w)}{\psi_x(u)-\psi_x(w)}$ is
a symmetric Laurent polynomial (i.e., a polynomial divided by a monomial with the same coefficients for $M$ and $M^{-1}$). Our choice for the type of each rhombus is made to guarantee that this cannot occur,
unless the vertex is obtained by taking forward steps from a rhombus containing the edge or vice versa (in which case, the vertex and the edge cannot coincide).

\section{Preliminaries}\label{sec:4}
%

In this section we repeat many of the notations, definitions and observations used in \cite{AF}.
These are included in order to make the paper self-contained. Throughout we omit separating commas from sequences descriptions and denote concatenation of sequences by product.

{\bf Outerplanarity, $\Delta$-trees and $T^*$.} 
We shall use the well-known fact that a Graph is outerplanar if and only if it has a planar linear embedding mapping its vertices to any set of distinct
roots of unity (see e.g., \cite[Theorem 4.10.3]{BA}) . Such a graph is said to be a \emph{complete outerplanar graph} if all of the bounded faces of such an embedding
are triangular. A \emph{triangulation} of an outerplanar graph is simply a complete outerplanar graph containing it. It is a well known fact that every outerplanar graph can be triangulated.

Let $\Delta$ be the triangle graph, that is, a graph on three vertices $v_0$,
$v_1$, and $v_2$, whose edges are $(v_0,v_1)$, $(v_0,v_2)$ and $(v_1,v_2)$. A graph is
said to be a \emph{$\Delta$-tree} if it can be generated from $\Delta$ by
iterations of adding a new vertex and connecting it to both ends of some
\emph{external edge} $(v,u)$ other than $(v_0,v_1)$, that is, a edge which no triangle has been glued to previously. We call this procedure the \emph{gluing} of a triangle to $(v,u)$, saying that the edge connected to $v$ is its \emph{left} edge and the one connected to $u$ is its \emph{right} edge.
It is a classical fact, which can be proved using induction, that any complete
outerplanar graph is a $\Delta$-tree. 

The adjacency graph of the bounded faces of such a graph forms a binary tree, that is -- a rooted tree of maximal degree 3.
By repeating the above gluing procedure ad infinitum, leaving no external edges, one obtains $T^*$, the infinite complete outerplanar graph, which contains all  $\Delta$-trees as subgraphs. Note that this graph is not locally finite. 
Formally, writing $[m]=\{1,\dots,m\}$,
\begin{defin}\label{def:inftree}
The \textbf{infinite complete outerplanar graph} is the graph  $T^*=(V,E)$ where \linebreak
$V=\{(a_i)_{i\in[n]}\ :\ a_i\in \{0,1\}, n\in \N, a_1=0, a_2=1,a_3=0\}$
and $(a,b)\in E$ for  $a=(a_i)_{i\in[m]}$ and $b=(b_i)_{i\in[n]}$ with  $m\le n$ whenever either
\begin{itemize}
\item $b_{m+1}=0$ and, for all $i>m+1$ we have $b_{i}=1$;
\item or $b_{m+1}=1$ and for all $i>m+1$ we have $b_{i}=0$ .
\end{itemize}
The \textbf{root edge} of the graph is the edge $(0,01)$.
\end{defin}
The root triangle of this tree thus consist of the vertices $0,01,010$. The sequence $p$ describing a particular vertex $v\notin\{0,01,010\}$, consists, after the initial $010$, of sequential instructions whether we should glue a new vertex on the left edge of the previously glued triangle (indicated, say, by 0) or on the right edge (indicated by 1). The vertex added in the final step corresponds to $v$.
Since every $\Delta$-tree is a finite subgraph of $T^*$, we deduce that every outerplanar graphs is also a finite subgraphs of $T^*$. Thus, Theorem~\ref{thm: 13 degenerate} reduces to the following proposition.
\begin{propos}\label{prop: 13 degenerate}
For almost every set $\{a_i\in (0,1)\}_{i \in [13]}$, the graph $T^*$ has a
drawing using only edge lengths from the set $\{a_i\}_{i\in [13]}$.
\end{propos}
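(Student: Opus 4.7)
The plan is to follow the four-step outline from Section~3. First I would work with the ternary rhombus tree $H^*$ rather than $T^*$ directly; since every edge of $T^*$ is an edge of some rhombus in the image $\pi(H^*\times H)$, it suffices to exhibit a drawing of $T^*$ in which every rhombus has side length $1$ and one of twelve prescribed diagonal lengths $|x_i-1|$, yielding a total of $1+12=13$ distinct edge lengths. Each rhombus of $H^*$ is identified with a finite word over $\{L,R,F\}$ describing the sequence of left, right, and forward gluings that reach it from the root, and is assigned a \emph{type} $i\in[12]$ dictating which diagonal it uses.

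Next I would define the polynomial embedding $\psi:V(T^*)\to\C[x_1,\dots,x_{12}]$ inductively on the address of the canonical representative rhombus of each vertex (as in \cite{AF}). Walking down $H^*$, the two new vertices of a child rhombus of type $i$ are set as affine combinations of their parent's vertices, chosen so that the resulting quadrilateral is a rhombus of side $1$ with one diagonal of length $|x_i-1|$. The type should depend only on a short suffix of the address, giving a self-similar construction. Evaluating $\psi$ at a generic point $x$ of the unit torus $(S^1)^{12}$ yields the candidate drawing $\psi_x$.

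I would then verify the two non-overlap properties. Vertex-vertex coincidences are excluded once one checks inductively that distinct vertices of $T^*$ yield distinct polynomials, since two distinct polynomials in the $x_i$ agree on at most a measure-zero subset of the torus. The harder part is showing that a vertex $v$ never lies on the interior of the image of an edge $(u,w)$. Here I would use the observation from Section~3: collinearity for a positive-measure set of $x$ forces the ratio
\[
\frac{\psi_x(v)-\psi_x(w)}{\psi_x(u)-\psi_x(w)}
\]
to be real for a positive-measure set of $x$, and on the unit torus, where $\overline{x_i}=x_i^{-1}$, reality of such a Laurent polynomial is equivalent to its being symmetric under $x_i\leftrightarrow x_i^{-1}$.

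The main obstacle therefore reduces to an algebraic/combinatorial problem: arrange the twelve types along $H^*$ so that the above Laurent polynomial is never symmetric, except in the geometrically trivial configuration where $v$ is reached from a rhombus containing $(u,w)$ by a sequence of forward steps, or vice versa --- the only case in which collinearity holds identically, yet $v$ lies outside the segment $(u,w)$ so no true crossing occurs. To carry this out I would write the ratio explicitly as a rational function in the $x_i$ along the paths from the common ancestor in $H^*$, extract monomials of extremal degree in each variable, and argue that symmetry of the Laurent polynomial would force equalities of types that the assignment rules out. I expect this case analysis --- in particular, designing the type pattern so that no ``accidental'' symmetry can arise for any pair of non-trivially positioned vertex and edge --- to be the heart of the proof and its most delicate step.
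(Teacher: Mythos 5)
Your outline faithfully reproduces the paper's four-step strategy (reduction to the rhombus tree $H^*$, a $12$-variable polynomial embedding with one rhombus type per variable, measure-zero exclusion of vertex--vertex coincidences, and the palindromicity criterion for reality of the ratio $\frac{\psi_x(v)-\psi_x(w)}{\psi_x(u)-\psi_x(w)}$ on the torus). However, as written it is a plan rather than a proof: the step you defer with ``I would\dots'' and ``I expect this case analysis\dots to be the heart of the proof'' is precisely where all of the content lies, and nothing in your proposal indicates how to carry it out. Concretely, you do not specify the type function (the paper uses $\ty(N)=(q_m \bmod 2,\,\rho_m \bmod 2,\, s_m \bmod 3)$, which is why exactly $2\cdot 2\cdot 3=12$ types appear), and you give no mechanism for ruling out ``accidental'' symmetry. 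The paper's mechanism is not a direct case analysis on extremal monomials: it passes to the substitution $x_{i,j,k}\mapsto y_k$, proves that every $\psi^*_u(y)$ is a $(y_0,y_1,y_2)$-ascending polynomial (Proposition~\ref{prop: 0-1-2 climbing}), and shows that central palindromicity of a quotient of a difference of two ascending polynomials forces the difference to be an integer multiple of the leading monomial (Proposition~\ref{prop:x123-polynom-b}); only then does it lift back to the $12$ variables. Without some such structural lemma, ``extract monomials of extremal degree and argue symmetry forces equalities of types'' is not yet an argument.

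A second gap: your palindromicity criterion presumes the denominator $\psi_x(u)-\psi_x(w)$ is a monomial, which holds only when $(u,w)$ is one of the three rhombus sides $(v_0,v_2)$, $(v_2,v_3)$, $(v_1,v_3)$. For the diagonal $(v_1,v_2)$ the edge vector is $P(1-y^{-1})$, a binomial, and reality of the quotient is equivalent to an \emph{antisymmetric} condition (coefficients of monomials symmetric about $P/\sqrt{y}$ must be additive inverses, Proposition~\ref{prop:simetric factors B}); the paper needs a separate argument for this case (Proposition~\ref{prop: psi is polynomial embedding3}). Finally, in the ``trivial'' collinear case you should say why $\psi_x(v)=\psi_x(w)+k\,(\psi_x(u)-\psi_x(w))$ with $k\in\Z$ excludes $\psi_x(v)$ from the \emph{open} segment: integers lie outside $(0,1)$, so the vertex can coincide only with an endpoint, which is already excluded by injectivity. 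These are fixable, but they are the theorem, not routine verification.
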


{\bf The rhombus graph $H$, Covering $T^*$ by rhombi.} In order to prove the
above proposition, we construct an explicit embedding of $T^*$ in $\C$. To do
so we introduce a covering of $T^*$ by copies of a particular directed graph
$H$ which we call a \emph{rhombus}. We then embed $T^*$ into $\C$, one copy
of $H$ at a time.

  \begin{figure}[htb!]
   \centering%
\begin{tikzpicture}
\begin{scope}[ultra thick, gray,  decoration={markings, mark=at position 0.65 with {\arrow[scale=3/2]{stealth}}}]
\coordinate[label={[xshift=-5mm,yshift=-3mm]\large $v_0$}] (a) at (0,0);
\coordinate[label={[xshift=-5mm,yshift=-3mm]\large $v_2$}] (b) at (2*0.602,2*0.798); 
\coordinate[label={[xshift=5mm,yshift=-3mm]\large $v_3$}] (c) at (2+2*0.602,2*0.798);
\coordinate[label={[xshift=5mm,yshift=-3mm]\large $v_1$}] (d) at (2,0);
\draw[postaction={decorate}] (a) -- (b);
\draw[postaction={decorate}] (b) -- (c);
\draw[postaction={decorate}] (d) -- (b);
\draw[postaction={decorate}] (d) -- (c);
\draw[postaction={decorate}] (a) -- (d);
\filldraw[black] (a) circle (2pt);
\filldraw[black] (b) circle (2pt);
\filldraw[black] (c) circle (2pt);
\filldraw[black] (d) circle (2pt);
\end{scope}
\end{tikzpicture}
   \caption{The rhombus graph $H$.}
   \label{fig: H}
   \end{figure}
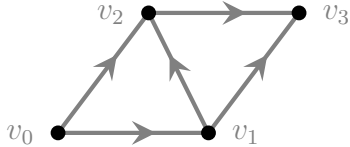

The \emph{rhombus} directed graph $H$, is defined to be the graph satisfying
$V_H=\{v_0,v_1,v_2,v_3\}$ and $E_H=\{(v_0,v_1),(v_0,v_2),(v_2,v_3),(v_1,v_3),(v_1,v_2)\}$.
We call $v_0$ and $(v_0,v_1)$ the \emph{base vertex} and \emph{base edge} of $H$, respectively.

Define the \emph{rhombus tree} $H^*$ to be the infinite directed rooted trinary tree whose \emph{nodes}
are copies of $H$. Labeling the three directed \emph{arcs} emanating from every node by
$0$ (corresponding to $(v_0,v_2)$), $1$ (corresponding to $(v_2,v_3)$) and $2$  (corresponding to $(v_1,v_3)$), 
we denote the root node of $H^*$ by $H^*_{1}$ and every other node by $H^*_a$ where $a$ is a trinary sequence starting with $1$, 
describing the path to it from the root.
Let $N$ be a node of $H^*$, and let $(v_i,v_j)\in E_H$; we call a pair $(N,v_i)$
a \emph{vertex} of $H^*$, and a pair $(N,(v_i,v_j))$, an \emph{edge} of $H^*$. Notice the
distinction between arcs of $H^*$ and edges of $H^*$, and the distinction
between nodes and vertices. We denote the collection of all vertices of $H^*$ by $V(V(H^*))$.

There exists a natural map $\pi: V(V(H^*)) \to V(T^*)$ which maps the vertices of each node of $H^*$ to the vertices of a pair of adjacent triangles of
$T^*$ and which is faithful in the following sense. If $N$ is connected to $M$ through an arc labeled by a trinary digit corresponding to an edge $v_iv_j$, then
$\pi((N,v_i))=\pi((M,v_0))$ and $\pi((N,v_j))=\pi((M,v_1))$. The $v_0v_1$ edge of $H^*_1$, the root node of $H^*$, is mapped to the root edge of $T^*$.
To simplify our notation, from now and on we abridge $\pi((N,v))$ to $\pi(N,v)$.

We also provide a formal definition of the mapping $\pi$, as follows.
\begin{defin}\label{def:inftree}
Let $(a_i)_{i\in [n]}$ with $a_i\in{\{0,1,2\}}$ and $a_1=1$. 
$\pi:V(V(H^*))\to V(T^*)$ is then defined by writing
\[(u_0,u_1)=\begin{cases} 
\big((H^*_{(a_i)_{i\in [n-1]}},v_0),(H^*_{(a_i)_{i\in [n-1]}},v_2)\big) & a_n=0\\
\big((H^*_{(a_i)_{i\in [n-1]}},v_2),(H^*_{(a_i)_{i\in [n-1]}},v_3)\big) & a_n=1\\
\big((H^*_{(a_i)_{i\in [n-1]}},v_1),(H^*_{(a_i)_{i\in [n-1]}},v_3)\big) & a_n=2
\end{cases}\]
and recursively setting,
\begin{align*}
\pi(H^*_{a},v_0)&=\pi(u_0)\\
\pi(H^*_{a},v_1)&=\pi(u_1)\\
\pi(H^*_{a},v_2)&=\begin{cases}
\pi(u_1)0 &  a_n\in \{0,1\}\\
\pi(u_1)1 & a_n =2
\end{cases}\\
\pi(H^*_{a},v_3)&=\pi(H^*_{a},v_2)1,
\end{align*}
with 
$\pi(H^*_{1},v_0)=0$ and $\pi(H^*_{1},v_1)=01.$
\end{defin}

We remark, that by writing $b(a)=(b_i)$ for a sequence produced from $a=(a_i)$ by replacing every $1$ along the sequence
with $10$ and every $2$ with $11$, we can provide a direct simple translation formula 
\begin{align}
\pi(H^*_{a},v_2)&=0b(a)\notag \\
\pi(H^*_{a},v_3)&=0b(a)1.\notag
\end{align}

In the rest of the paper we extend $\pi$ naturally to edges and subgraphs. A portion of $T^*$ and its covering by
$H^*$ through $\pi$ are depicted in Figure~\ref{fig: T star}.

  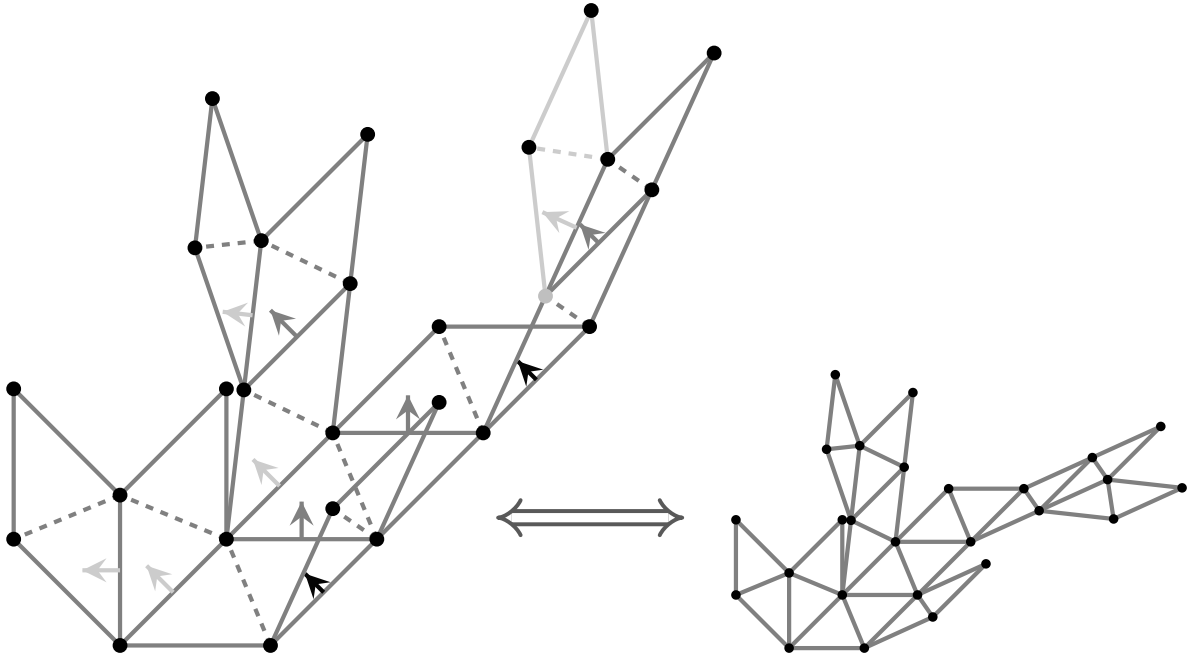
\begin{figure}[htb!]

   \centering%
   \begin{tikzpicture}
\begin{scope}[ultra thick, gray,  decoration={markings, mark=at position 1 with {\arrow[scale=3/2]{stealth}}}]
\coordinate (a) at (0,0);
\coordinate (b) at (2*0.707,2*0.707); 
\coordinate (c) at (2+2*0.707,2*0.707);
\coordinate (d) at (2,0);
\coordinate (e) at (4*0.707,4*0.707);
\coordinate (f) at (2+4*0.707,4*0.707);
\coordinate (g) at (2*0.707+2*0.116,2*0.707+2*0.993);
\coordinate (h) at (4*0.707+2*0.116,4*0.707+2*0.993);
\coordinate (i) at (2*0.707+4*0.116,2*0.707+4*0.993);
\coordinate (j) at (4*0.707+4*0.116,4*0.707+4*0.993);
\coordinate (k) at (2*0.707+2*0.116-2*0.325,2*0.707+2*0.993+2*0.945);
\coordinate (l) at (2*0.707+4*0.116-2*0.325,2*0.707+4*0.993+2*0.945);
\coordinate (m) at (0,2);
\coordinate (n) at (2*0.707,2*0.707+2);
\coordinate (o) at (-2*0.707,2*0.707);
\coordinate (p) at (-2*0.707,2*0.707+2);

\coordinate (q) at  (6*0.707,6*0.707);
\coordinate (r) at (2+6*0.707,6*0.707);
\coordinate (s) at (2+4*0.707+2*0.414,4*0.707+2*0.91);
\coordinate (t) at (2+6*0.707+2*0.414,6*0.707+2*0.91);

\coordinate (u) at (2+0*0.707+2*0.414,0*0.707+2*0.91);
\coordinate (v) at (2+2*0.707+2*0.414,2*0.707+2*0.91);

\coordinate (w) at (2+4*0.707+4*0.414,4*0.707+4*0.91);
\coordinate (x) at (2+6*0.707+4*0.414,6*0.707+4*0.91);
\coordinate (y) at (2+4*0.707+2*0.414-2*0.11,4*0.707+2*0.91+2*0.99);
\coordinate (z) at (2+4*0.707+4*0.414-2*0.11,4*0.707+4*0.91+2*0.99);

\draw (a) -- (b) coordinate[midway] (M);
\draw [black!20,ultra thick,postaction={decorate}] ($(M)!0mm!90:(b)$) -- ($(M)!2*0.25cm!270:(a)$);
\draw (b) -- (c) coordinate[midway] (M);
\draw [black!50,ultra thick,postaction={decorate}] ($(M)!0mm!90:(c)$) -- ($(M)!2*0.25cm!270:(b)$);
\draw [dashed] (d) -- (b);
\draw (d) -- (c) coordinate[midway] (M);
\draw [black!98,ultra thick,postaction={decorate}] ($(M)!0mm!90:(c)$) -- ($(M)!2*0.175cm!270:(d)$);
\draw (a) -- (d);

\draw (b) -- (e) coordinate[midway] (M);
\draw [black!20,ultra thick,postaction={decorate}] ($(M)!0mm!90:(e)$) -- ($(M)!2*0.25cm!270:(b)$);
\draw (e) -- (f) coordinate[midway] (M);
\draw [black!50,ultra thick,postaction={decorate}] ($(M)!0mm!90:(f)$) -- ($(M)!2*0.25cm!270:(e)$);
\draw (c) -- (f);
\draw [dashed] (c) -- (e);

\draw (b) -- (g);
\draw (g) -- (h) coordinate[midway] (M);
\draw [black!50,ultra thick,postaction={decorate}] ($(M)!0mm!90:(h)$) -- ($(M)!2*0.25cm!270:(g)$);
\draw (e) -- (h);
\draw [dashed] (e) -- (g);

\draw (g) -- (i) coordinate[midway] (M);
\draw [black!20,ultra thick,postaction={decorate}] ($(M)!0mm!90:(i)$) -- ($(M)!2*0.2cm!270:(g)$);
\draw (i) -- (j);
\draw (h) -- (j);
\draw [dashed] (h) -- (i);

\draw (g) -- (k);
\draw (k) -- (l);
\draw (i) -- (l);
\draw [dashed] (i) -- (k);

\draw (a) -- (m) coordinate[midway] (M);
\draw [black!20,ultra thick,postaction={decorate}] ($(M)!0mm!90:(m)$) -- ($(M)!2*0.25cm!270:(a)$);
\draw (m) -- (n);
\draw (n) -- (b);
\draw [dashed] (b) -- (m);

\draw (a) -- (o);
\draw (o) -- (p);
\draw (p) -- (m);
\draw [dashed] (m) -- (o);

\draw (e) -- (q);
\draw (q) -- (r);
\draw (r) -- (f) coordinate[midway] (M);
\draw [black!98,ultra thick,postaction={decorate}] ($(M)!0mm!90:(r)$) -- ($(M)!2*0.175cm!270:(f)$);
\draw [dashed] (f) -- (q);

\draw (f) -- (s); 
\draw (s) -- (t) coordinate[midway] (M);
\draw [black!50,ultra thick,postaction={decorate}] ($(M)!0mm!90:(t)$) -- ($(M)!2*0.175cm!270:(s)$);
\draw (t) -- (r);
\draw [dashed] (r) -- (s);

\draw (d) -- (u);
\draw (u) -- (v);
\draw (v) -- (c);
\draw [dashed] (c) -- (u);

\draw (s) -- (w) coordinate[midway] (M);
\draw [black!20,ultra thick,postaction={decorate}] ($(M)!0mm!90:(w)$) -- ($(M)!2*0.25cm!270:(s)$);
\draw (w) -- (x);
\draw (x) -- (t);
\draw [dashed] (t) -- (w);

\draw [black!20] (s) -- (y);
\draw [black!20] (y) -- (z);
\draw [black!20] (z) -- (w);
\draw [black!20] [black!20,ultra thick, dashed] (w) -- (y);

\draw[black!65, line width=1.5pt, double distance=3.5pt, implies-implies] (7.5,1.7) -- (5,1.7) ;

\filldraw[black] (a) circle (2pt);
\filldraw[black] (b) circle (2pt);
\filldraw[black] (c) circle (2pt);
\filldraw[black] (d) circle (2pt);
\filldraw[black] (e) circle (2pt);
\filldraw[black] (f) circle (2pt);
\filldraw[black] (g) circle (2pt);
\filldraw[black] (h) circle (2pt);
\filldraw[black] (i) circle (2pt);
\filldraw[black] (j) circle (2pt);
\filldraw[black] (k) circle (2pt);
\filldraw[black] (l) circle (2pt);
\filldraw[black] (m) circle (2pt);
\filldraw[black] (n) circle (2pt);
\filldraw[black] (o) circle (2pt);
\filldraw[black] (p) circle (2pt);
\filldraw[black] (q) circle (2pt);
\filldraw[black] (r) circle (2pt);
\filldraw[black!25] (s) circle (2pt);
\filldraw[black] (t) circle (2pt);
\filldraw[black] (u) circle (2pt);
\filldraw[black] (v) circle (2pt);
\filldraw[black] (w) circle (2pt);
\filldraw[black] (x) circle (2pt);
\filldraw[black] (y) circle (2pt);
\filldraw[black] (z) circle (2pt);

\end{scope}
\end{tikzpicture}
   \begin{tikzpicture}
\begin{scope}[ultra thick, gray,  decoration={markings, mark=at position 1 with {\arrow[scale=3/2]{stealth}}}]
\coordinate (a) at (0,0);
\coordinate (b) at (0.707,0.707); 
\coordinate (c) at (1+0.707,0.707);
\coordinate (d) at (1,0);
\coordinate (e) at (2*0.707,2*0.707);
\coordinate (f) at (1+2*0.707,2*0.707);
\coordinate (g) at (0.707+0.116,0.707+0.993);
\coordinate (h) at (2*0.707+0.116,2*0.707+0.993);
\coordinate (i) at (0.707+2*0.116,0.707+2*0.993);
\coordinate (j) at (2*0.707+2*0.116,2*0.707+2*0.993);
\coordinate (k) at (0.707+0.116-0.325,0.707+0.993+0.945);
\coordinate (l) at (0.707+2*0.116-0.325,0.707+2*0.993+0.945);
\coordinate (m) at (0,1);
\coordinate (n) at (0.707,0.707+1);
\coordinate (o) at (-0.707,0.707);
\coordinate (p) at (-0.707,0.707+1);

\coordinate (q) at  (3*0.707,3*0.707);
\coordinate (r) at (1+3*0.707,3*0.707);
\coordinate (s) at (1+2*0.707+0.91,2*0.707+0.414);
\coordinate (t) at (1+3*0.707+0.91,3*0.707+0.414);

\coordinate (u) at (1+0*0.707+0.91,0*0.707+0.414);
\coordinate (v) at (1+0.707+0.91,0.707+0.414);

\coordinate (w) at (1+2*0.707+2*0.91,2*0.707+2*0.414);
\coordinate (x) at (1+3*0.707+2*0.91,3*0.707+2*0.414);
\coordinate (y) at (1+2*0.707+0.91+0.99,2*0.707+0.414-0.11);
\coordinate (z) at (1+2*0.707+2*0.91+0.99,2*0.707+2*0.414-0.11);

\draw (a) -- (b);
\draw (b) -- (c);
\draw (d) -- (b);
\draw (d) -- (c);
\draw (a) -- (d);

\draw (b) -- (e);
\draw (e) -- (f);
\draw (c) -- (f);
\draw (c) -- (e);

\draw (b) -- (g);
\draw (g) -- (h);
\draw (e) -- (h);
\draw (e) -- (g);

\draw (g) -- (i);
\draw (i) -- (j);
\draw (h) -- (j);
\draw (h) -- (i);

\draw (g) -- (k);
\draw (k) -- (l);
\draw (i) -- (l);
\draw (i) -- (k);

\draw (a) -- (m);
\draw (m) -- (n);
\draw (n) -- (b);
\draw (b) -- (m);

\draw (a) -- (o);
\draw (o) -- (p);
\draw (p) -- (m);
\draw (m) -- (o);

\draw (e) -- (q);
\draw (q) -- (r);
\draw (r) -- (f);
\draw (f) -- (q);

\draw (f) -- (s);
\draw (s) -- (t);
\draw (t) -- (r);
\draw (r) -- (s);

\draw (d) -- (u);
\draw (u) -- (v);
\draw (v) -- (c);
\draw (c) -- (u);

\draw (s) -- (w);
\draw (w) -- (x);
\draw (x) -- (t);
\draw (t) -- (w);

\draw (s) -- (y);
\draw (y) -- (z);
\draw (z) -- (w);
\draw (w) -- (y);

\filldraw[black] (a) circle (1pt);
\filldraw[black] (b) circle (1pt);
\filldraw[black] (c) circle (1pt);
\filldraw[black] (d) circle (1pt);
\filldraw[black] (e) circle (1pt);
\filldraw[black] (f) circle (1pt);
\filldraw[black] (g) circle (1pt);
\filldraw[black] (h) circle (1pt);
\filldraw[black] (i) circle (1pt);
\filldraw[black] (j) circle (1pt);
\filldraw[black] (k) circle (1pt);
\filldraw[black] (l) circle (1pt);
\filldraw[black] (m) circle (1pt);
\filldraw[black] (n) circle (1pt);
\filldraw[black] (o) circle (1pt);
\filldraw[black] (p) circle (1pt);
\filldraw[black] (q) circle (1pt);
\filldraw[black] (r) circle (1pt);
\filldraw[black] (s) circle (1pt);
\filldraw[black] (t) circle (1pt);
\filldraw[black] (u) circle (1pt);
\filldraw[black] (v) circle (1pt);
\filldraw[black] (w) circle (1pt);
\filldraw[black] (x) circle (1pt);
\filldraw[black] (y) circle (1pt);
\filldraw[black] (z) circle (1pt);

\end{scope}
\end{tikzpicture}
   \caption{On the left: a drawing by  $\psi$ of a portion of $T^*$, viewed as the image of $H^*$ through $\pi$. On the right: the planar image of the same graph. The light gray, dark gray and black arrows represent the left $(0)$, forward $(1)$, and right $(2)$ steps between nodes in $H^*$ respectively.  The light gray colored vertex is $v=010101011\in V(T^*)$. This is the base vertex of the light gray sided rhombus  $N=H^*_{111210}$ which has a proper encoding; hence both are encoded by the QR-encoding $((2,1,0), (1,0))$  (abridged to $((2,1), (1,0))$), which is the unique encoding of $N$ (and thus the unique proper encoding of $v$). Observe that $v$ has numerous alternative improper encodings, such as $((2,1), (1))$).}
\label{fig: T star}
   \end{figure}

{\bf Encoding the rhombi.} 
To simplify our proofs, we further encode $H^*$ by a so called \emph{QR-encoding} which encodes the path to
each node via "how many forward steps to take between each turn left or right" and "whether the $i$-th turn is left or right".
Formally, writing $m(H_a)=|\{i\ :\ a_i\neq 1\}|$,
the \emph{QR-encoding} of $H_a$ consists of the pair
$\left((q_i(H_a))_{i\in [m(H_a)+1]},(\rho_i(H_a))_{i\in [m(H_a)]}\right)$.
We set 
\begin{align*}
q_i(H_a)&=\left|\{m\ :\ \sum_{j=1}^m\ind(a_j\neq 1)=i-1\}\right|-1 \\
\rho_i(H_a)&=\ind\left(\exists m\ :\ a_m=2, \sum_{j=1}^m\ind(a_j\neq 1)=i\right) 
\end{align*}
Namely,  $q_i(H_a)$ is the
number of $1$-s labeled steps between the $(i-1)$-th non-$1$ label in $a$ and
the $i$-th one, and $\rho_i(H_a)$ indicates taht the $i$-th
non-$1$ labeled step is labeled $2$. When $q_{m(N)+1}=0$ we omit it, and denote the QR encoding by 
$\left((q_i(N))_{i\in m(N)},(\rho_i(N))_{i\in m(N)}\right)$.

In accordance with our informal introduction, a QR-encoding
$\left((q_i)_{i\in[m+1]},(\rho_i)_{i\in[m]}\right)$ should be interpreted as taking $q_1$ steps
forward, then turning left or right according to $\rho_1$ being $0$ or $1$
respectively, then taking another $q_2$ steps forward in the new direction
and so on and so forth. Observe that the QR-encoding of each node is unique.

{\bf Encoding the vertices of $T^*$.}  We further require en encoding for each vertex $v\in T^*$ which would relate it to the QR encoding of a rhombus whose vertex is mapped to $v$ via $\pi$. We fist make the following observation,
\begin{obs}\label{obs: once as 2-3}
For every vertex in $v\in T^*$ except $0$ and $01$, there exists a unique $N\in H^*$ such that $\pi(N,v_j)=v$, and $j\in \{3,4\}$.
\end{obs}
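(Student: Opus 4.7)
The plan is to reduce the claim to a statement about unique decoding of binary strings under the morphism $b$ recalled just before the statement. Using the translation formulas $\pi(H^*_a,v_2)=0\,b(a)$ and $\pi(H^*_a,v_3)=0\,b(a)\,1$, where $b$ is the substitution $0\mapsto 0,\ 1\mapsto 10,\ 2\mapsto 11$ and $a$ is a trinary sequence starting with $1$, and noting that every vertex $v\in V(T^*)$ other than $0$ and $01$ satisfies $v=0\,v'$ for some $v'\in\{0,1\}^*$ of length at least two starting with $10$, the existence and uniqueness of a pair $(N,j)$ with $j\in\{2,3\}$ and $\pi(N,v_j)=v$ becomes equivalent to showing that $v'$ admits exactly one decomposition of the form $v'=b(a)$ (the $v_2$ case, with $N=H^*_a$) or $v'=b(a')\,1$ with $b(a')$ in the image of $b$ (the $v_3$ case, with $N=H^*_{a'}$).

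The central observation I will invoke is that $\{b(0),b(1),b(2)\}=\{0,10,11\}$ is a prefix-free code over $\{0,1\}$. Consequently the greedy left-to-right parser --- read an initial $0$ as the letter $0$, and read an initial $1$ together with its successor as the letter $1$ or $2$ according as the successor is $0$ or $1$ --- is deterministic, and can fail in exactly one way: upon reaching a solitary trailing $1$ with no successor. Running this parser on $v'$: if it completes, $v'=b(a)$ for a uniquely determined $a$ beginning with $1$, placing $v$ in the $v_2$ case; if it fails, it fails only at the final position with $v'_n=1$, and the truncation $v'_1\cdots v'_{n-1}$ has then been parsed to a unique $a'$, so $v'=b(a')\,1$ and $v$ lies in the $v_3$ case.

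The one point requiring verification, which I expect to be the only slightly delicate step, is mutual exclusivity of the two cases. Suppose $v'$ parses completely and moreover $v'_n=1$. Then the terminal $1$ must be absorbed as the second half of a codeword; since $10$ ends in $0$, this codeword must be $11$, forcing $v'_{n-1}=1$ and requiring the parser on $v'$ to pair $v'_{n-1}v'_n$ at the very end. The parser run on $v'_1\cdots v'_{n-1}$ follows identical execution up to the penultimate step and then stalls on the solitary $v'_{n-1}=1$, so the $v_3$ decomposition is ruled out; combined with the positive direction above, exactly one of the two cases applies and $(N,j)$ is uniquely determined.

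Finally, the exclusion of $v=0$ and $v=01$ is automatic: since $a$ begins with $1$, $b(a)$ begins with $10$, so both $0\,b(a)$ and $0\,b(a)\,1$ have length at least three and begin with $010$, and in particular neither equals $0$ nor $01$. Thus these two vertices genuinely cannot arise as $\pi(N,v_2)$ or $\pi(N,v_3)$, which is why they must be excluded from the statement. Beyond the prefix-code observation and the short parsing argument above, no further obstacle is anticipated.
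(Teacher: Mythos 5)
Your argument is correct. The paper states this observation without proof, so there is nothing to compare against line by line; your route via the translation formulas $\pi(H^*_a,v_2)=0\,b(a)$, $\pi(H^*_a,v_3)=0\,b(a)1$ and the fact that $\{0,10,11\}$ is a complete prefix-free code (so the left-to-right parse of $v'$ is forced at every step and can fail only on a dangling terminal $1$) is exactly the natural way to supply the missing argument, and your handling of mutual exclusivity and of the excluded vertices $0$, $01$ is sound. One small point: the statement's ``$j\in\{3,4\}$'' is an off-by-one slip for $j\in\{2,3\}$ (the paper later invokes the observation as ``appears exactly once as either $v_2$ or $v_3$''), and you correctly proved the intended version.
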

To further simplify our encoding, associate each $v$ with a unique node which satisfies $\pi(N,v_0)=v$. Informally, for all vertices not contained in the root edge, this is done by taking another step forward and a step to the right or to the left from the unique rhombus in which $v$ plays the role of $v_3$ or $v_4$.

Formally, let $N\in H^*$, if $\QR(N)=\left((q_i)_{i\in[m]},(\rho_i)_{i\in[m]}\right)$ (so that $q_{m+1}=0$) and $q_{m}>0$ or $m=1$,
we say that $N$ has a \emph{proper} encoding.

This allows us to use the following observation from \cite{AF}.

\begin{obs}[{\cite[Observation~1]{AF}}]\label{obs: properties of rho-q}
Let $u\in T^*$, there exists a unique $N\in H^*$ with proper encoding such that $\pi(N,v_0)=u$.
\end{obs}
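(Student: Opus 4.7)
The plan is to characterize explicitly the set $S(u) := \{N \in H^* : \pi(N, v_0) = u\}$ of rhombi whose base vertex maps to $u$, and then verify that the proper-encoding condition isolates a unique member. The first step rests on a dynamics observation: by faithfulness of $\pi$, appending arc $0$ to the ternary label $a$ of $N = H^*_a$ preserves $\pi(N, v_0)$, appending arc $1$ replaces it by $\pi(N, v_2)$, and appending arc $2$ replaces it by $\pi(N, v_1)$. Tracing any $M \in S(u)$ backwards and peeling off trailing $0$-arcs thus identifies a unique ``first entry point''---a non-$0$ arc at which the running $v_0$-image first became $u$---after which the remainder of the path to $M$ is purely $0$-arcs.

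For $u \notin \{0, 01\}$, I would invoke Observation~\ref{obs: once as 2-3} to fix the unique pair $(N, j)$ with $j \in \{2, 3\}$ and $\pi(N, v_j) = u$. Together with the translation formulas $\pi(H^*_a, v_2) = 0\,b(a)$ and $\pi(H^*_a, v_3) = 0\,b(a)\,1$, this lets me classify the possible entry points. An entry via arc $1$ requires the parent's $v_2$-image to equal $u$, forcing $j = 2$ with parent $= N$. An entry via arc $2$ requires the parent's $v_1$-image to equal $u$; expanding $v_1$ as the grand-parent's $v_2$ across arc $0$ or $v_3$ across arcs $1$ or $2$, and re-applying Observation~\ref{obs: once as 2-3}, the parent must be $N \cdot 0$ when $j = 2$, or one of $N \cdot 1, N \cdot 2$ when $j = 3$. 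Closing under appending $0$s then gives
\[
S(u) = \begin{cases} \{N \cdot 1 \cdot 0^k : k \geq 0\} \cup \{N \cdot 0 \cdot 2 \cdot 0^k : k \geq 0\}, & j = 2, \\ \{N \cdot 1 \cdot 2 \cdot 0^k : k \geq 0\} \cup \{N \cdot 2 \cdot 2 \cdot 0^k : k \geq 0\}, & j = 3. \end{cases}
\]

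I would then test the proper condition directly from each candidate's QR-encoding. A trailing $1$ in $a$ forces $q_{m+1} > 0$; a trailing non-$1$ entry immediately preceded by another non-$1$ entry (no intervening $1$s) forces $q_m = 0$ with $m \geq 2$; in either case properness fails. Among the four chains above, only $N \cdot 1 \cdot 0$ (case $j = 2$) and $N \cdot 1 \cdot 2$ (case $j = 3$) satisfy both $q_{m+1} = 0$ and $q_m = q_{m_N + 1} + 1 > 0$, yielding the unique proper representative in each case. The boundary cases $u \in \{0, 01\}$ are dispatched by direct inspection: $S(0) = \{H^*_{10^k}\}$ has $H^*_{10}$ as its unique proper member, and $S(01) = \{H^*_{12 \cdot 0^k}\}$ has $H^*_{12}$, both qualifying under the $m = 1$ clause. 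The main obstacle is ensuring that the enumeration of $S(u)$ is exhaustive, namely that no lateral descent in the ternary tree $H^*$ produces $u$ as a $v_0$-image beyond the four listed chains; this reduces to an injectivity property of the maps $a \mapsto 0\,b(a)$ and $a \mapsto 0\,b(a)\,1$, which follows from the explicit bijective substitution $b$ (sending $1 \mapsto 10$ and $2 \mapsto 11$) together with Observation~\ref{obs: once as 2-3}.
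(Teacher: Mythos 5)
Your proof is correct. Note that the paper itself offers no proof of this observation --- it is imported verbatim from \cite{AF} (Observation~1 there) --- so there is no in-paper argument to compare against; what you have written is a complete, self-contained verification. Your strategy (characterize $S(u)=\{N:\pi(N,v_0)=u\}$ exactly, then sieve by properness) is sound: the peeling of trailing $0$-arcs, combined with the recursion defining $\pi$ (arc $0$ preserves the $v_0$-image, arc $1$ pulls back to the parent's $v_2$, arc $2$ to the parent's $v_1$) and the uniqueness statement of Observation~\ref{obs: once as 2-3}, does exhaust the four chains you list, and your reading of the properness condition ($q_{m+1}=0$ together with $q_m>0$ or $m=1$) correctly kills every candidate except $N\cdot 1\cdot 0$ (when $u$ is a $v_2$) or $N\cdot 1\cdot 2$ (when $u$ is a $v_3$); this is consistent with the worked examples in the paper, e.g.\ $v=010101011=\pi(H^*_{1112},v_2)$ having unique proper node $H^*_{111210}=N\cdot 1\cdot 0$. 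Two minor remarks: the closing appeal to injectivity of $a\mapsto 0b(a)$ is redundant, since exhaustiveness already follows from the entry-point decomposition plus Observation~\ref{obs: once as 2-3}; and you are implicitly (and correctly) reading the paper's ``$j\in\{3,4\}$'' in Observation~\ref{obs: once as 2-3} as the typo it is for $j\in\{2,3\}$, which is how the paper itself uses it later.
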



For example, observe that $\pi(H^*_1,v_0)=\pi(H^*_{10},v_0)$, whose QR encoding is $((0,0),(0))$. Hence this is the proper encoding of $\pi(H^*_1,v_0)=0\in T^*$.
Similarly, $\pi(H^*_{101},v_1)=\pi(H^*_{1012},v_0)$, whose QR encoding is $((0,1),(0,1))$. Hence this is the proper encoding of $\pi(H^*_{101},v_1)=01001\in T^*$.

Using this observation we define $QR(u)$ for $u\in T^*$ as the unique proper encoding of the node $N$ for which $\pi(N,v_0)=u$.

{\bf Polynomial embedding.}
Denote by $\T^d$ the $d$-dimensional unit torus in $\C^d$, i.e.,
\[\T^d=\{(x_1,\dots,x_d)\in\C^d:\forall i\in\{1,\dots,d\}, |x_i|=1\}.\]
We begin by defining a degenerate polynomial embedding, as an auxiliary in defining a full
polynomial embedding.

\begin{defin}\label{def:deg poly emb}
A \emph{ $d$-degenerate polynomial embedding of a graph $G$} is a one-to-one mapping $\psi:V(G)\to
\C[x_1,\dots,x_d]$ with integer coefficients, for which it holds that for every fixed
$x\in\T^d$ the
map $v\mapsto \psi(v)(x)=\psi_x(v)$ is a linear embedding.
\end{defin}

The importance of $d$-degenerate polynomial embeddings to our purpose stems from the
following proposition from \cite{AF}.
\begin{propos}[{\cite[Proposition 2]{AF}}]\label{prop: almost every polynomial pre-drawing is degenerate}
If $\psi$ is a $d$-degenerate polynomial embedding of a graph $G$,
then for almost every $x=(x_1,\dots,x_d)\in \T^d$, $\psi_x$ is a degenerate
drawing of $G$.
\end{propos}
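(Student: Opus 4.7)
The only condition distinguishing a degenerate drawing from an arbitrary linear embedding is that distinct vertices receive distinct images. My plan is therefore to show, for each pair of distinct vertices $u,v \in V(G)$, that the ``bad set''
\[B_{u,v} = \{x \in \T^d : \psi_x(u) = \psi_x(v)\}\]
has Haar measure zero on $\T^d$, and then to invoke countable additivity. The countability needed is mild: the vertex set of any graph $G$ for which this proposition is applied here (e.g.\ $T^*$ or any subgraph of it) is countable.

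Since $\psi:V(G)\to\C[x_1,\dots,x_d]$ is one-to-one, the polynomial $P_{u,v} = \psi(u)-\psi(v)$ is a \emph{nonzero} element of $\C[x_1,\dots,x_d]$ whenever $u\neq v$, and $B_{u,v}$ is precisely its zero locus on $\T^d$. The key lemma is thus the following: the zero set on $\T^d$ of a nonzero polynomial $P\in\C[x_1,\dots,x_d]$ has Haar measure zero. I would prove this by induction on $d$. For $d=1$, a nonzero polynomial has finitely many complex roots and therefore finitely many on $\T^1$. For the induction step, expand
\[P(x_1,\dots,x_d) \;=\; \sum_{k=0}^{N} P_k(x_1,\dots,x_{d-1})\,x_d^k\]
and pick $k_0$ with $P_{k_0}$ not identically zero. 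By the induction hypothesis the set $A\subset \T^{d-1}$ on which $P_{k_0}$ vanishes has measure zero; off of $A$, $P(x_1,\dots,x_{d-1},\,\cdot\,)$ is a nonzero univariate polynomial in $x_d$, hence vanishes at finitely many values of $x_d$. Fubini then yields that $\{P=0\}\cap\T^d$ has measure zero.

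Combining these two ingredients, the set $\bigcup_{u\neq v} B_{u,v}$ is a countable union of null sets, hence itself null; on its complement $\psi_x$ is a degenerate drawing of $G$. I don't expect a genuine obstacle here: the argument is entirely standard, and the only step with any content is the measure-zero claim for zero sets of polynomials restricted to the torus, which is handled cleanly by the induction above. The real work of the paper lies elsewhere --- in the polynomial embedding to be constructed, and in preventing vertex--edge incidences, which require identities rather than mere measure considerations.
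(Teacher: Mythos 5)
Your argument is correct and is the standard one: since $\psi$ is one-to-one, each difference $\psi(u)-\psi(v)$ is a nonzero polynomial, its zero set on $\T^d$ is null (by your induction-plus-Fubini lemma), and the union over the countably many pairs (countability being automatic, as $\psi$ injects $V(G)$ into the countable set of integer-coefficient polynomials) is still null. The paper itself does not reprove this statement but imports it verbatim from \cite{AF}, where the proof is essentially the same measure-theoretic argument, so there is nothing to flag.
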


Recall our notation $(\psi_x(w),\psi_x(u))\!=\!\{\alpha\psi_x(w)+(1-\alpha)\psi_x(u):\alpha\in(0,1)\}$ and define the following.
\begin{defin}\label{def:poly emb}
A $d$-polynomial embedding of a graph $G$ is a $d$-degenerate polynomial embedding which satisfies that
for every vertex $v$ and edge $(u,w)$ in $G$ and for almost every $x\in\mathbb{T}^d$ we have
\begin{equation}
\psi_x(v)\not\in(\psi_x(w),\psi_x(u)).
\end{equation}
\end{defin}

This definition guarantees the following
\begin{obs}\label{obs: almost every good polynomial1}
If $\psi$ is a $d$-polynomial embedding of a graph $G$,
then,  for almost every \linebreak $x=(x_1,\dots,x_d)\in \T^d$, we have that $\psi_x$ is a drawing of $G$.
\end{obs}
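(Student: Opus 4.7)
The plan is to deduce the observation by combining Proposition~\ref{prop: almost every polynomial pre-drawing is degenerate} with the almost-everywhere non-incidence assumption in Definition~\ref{def:poly emb}, and then taking a countable union of null sets. Since being a drawing means being a degenerate drawing in which additionally no vertex image lies on the image of any non-incident edge, we need only prove that the set of parameters for which some vertex-edge incidence occurs has Lebesgue measure zero on $\T^d$.

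First, applying Proposition~\ref{prop: almost every polynomial pre-drawing is degenerate}, since every $d$-polynomial embedding is in particular a $d$-degenerate polynomial embedding, the set
\[D_0:=\{x\in\T^d:\psi_x\text{ is not a degenerate drawing of }G\}\]
has measure zero. Second, for every pair consisting of a vertex $v\in V(G)$ and an edge $(u,w)\in E(G)$, the defining property of a $d$-polynomial embedding asserts that
\[E_{v,(u,w)}:=\{x\in\T^d:\psi_x(v)\in(\psi_x(w),\psi_x(u))\}\]
is a null set in $\T^d$.

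Since $G$ has at most countably many vertices and at most countably many edges (which is the relevant case for us, as all graphs considered in this paper, including $T^*$, are countable), the collection of all pairs $(v,(u,w))$ is countable. Hence the set
\[B:=D_0\cup\bigcup_{v\in V(G),\,(u,w)\in E(G)}E_{v,(u,w)}\]
is a countable union of null sets and therefore has measure zero on $\T^d$.

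For every $x\in\T^d\setminus B$, the map $\psi_x$ is a degenerate drawing (so all vertex images are distinct) and moreover no vertex image lies in the open segment of any edge image, which is precisely the definition of a drawing of $G$. Hence $\psi_x$ is a drawing for almost every $x\in\T^d$, as required. The only step that requires any care beyond bookkeeping is the countability of the index set of vertex-edge pairs, which is immediate for the graphs considered here; there is no genuine obstacle since the non-trivial content has already been supplied by Definition~\ref{def:poly emb} and Proposition~\ref{prop: almost every polynomial pre-drawing is degenerate}.
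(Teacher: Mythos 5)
Your argument is correct and is exactly the reasoning the paper leaves implicit when it states that Definition~\ref{def:poly emb} ``guarantees'' the observation: combine Proposition~\ref{prop: almost every polynomial pre-drawing is degenerate} with the per-pair null sets from the definition and take a countable union. The remark about countability of the vertex--edge pairs is the right caveat and holds for all graphs considered here, so nothing is missing.
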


Next, consider a vertex $v$ and an edge $(u,w)$ in $T^*$ and denote $ f(x)=\frac{\psi_x(v)-\psi_x(w)}{\psi_x(u)-\psi_x(w)}$. We first observe that if $f(x)$ is not a real function we obtain that $\psi_x(v)\in(\psi_x(w),\psi_x(u))$ only on a set of measure $0$ in $\T^d$.
Indeed, to see this, observe that $\psi_x(v)\in(\psi_x(w),\psi_x(u))$ implies that $f(x)$ is real; take a parametrization of $\T^d$ by $e^{it}=(e^{it_1},\dots,e^{it_d})$ for $t\in \R^d$ and notice that $f(e^{it})$ is a trigonometric rational function. Hence $i \Im(f(e^{it}))$ is real trigonometric rational function.  Consequently, unless $\Im(f(e^{it}))\equiv 0$, it nullifies only on a finite set in $[-\pi,\pi]^d$.

Hence we make the following observation.
\begin{obs}\label{obs: reduction1}
To show that
$\psi$ is a $d$-polynomial embedding of a graph $G$, it would suffice to show that, for every $(u,w)\in E(G)$ and $v\in V(G)$, one of the following holds:
\begin{enumerate}
\item either $\psi_x(v)\not\in(\psi_x(w),\psi_x(u))$ for all $x\in  \T^d$,
\item or the rational function $f(x)=\frac{\psi_x(v)-\psi_x(w)}{\psi_x(u)-\psi_x(w)}$ is not real on $\T^d$.
\end{enumerate}
\end{obs}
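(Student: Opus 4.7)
The plan is to unfold Definition~\ref{def:poly emb} pairwise. Fix a vertex $v$ and an edge $(u,w)$ of $G$, and set
\[B_{v,u,w}=\{x\in\T^d:\psi_x(v)\in(\psi_x(w),\psi_x(u))\}.\]
The goal is to show that under either of the two alternatives, $B_{v,u,w}$ has Lebesgue measure zero in $\T^d$; a countable union over the (countably many) pairs $(v,(u,w))\in V(G)\times E(G)$ then produces a full-measure set of $x$ on which no vertex image lies inside an edge image, which together with the $d$-degenerate polynomial embedding hypothesis yields the definition of a $d$-polynomial embedding.

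Alternative (1) is immediate, as it states $B_{v,u,w}=\emptyset$. For alternative (2), I would formalise the sketch given in the paragraph preceding the observation: if $\psi_x(v)\in(\psi_x(w),\psi_x(u))$ then $f(x)\in(0,1)\subset\R$, so in particular $\Im f(x)=0$. Parametrising $\T^d$ by $t\in[-\pi,\pi]^d$ via $x_j=e^{it_j}$, the function $t\mapsto f(e^{it})$ is a ratio of two trigonometric polynomials. Its denominator $\psi_{e^{it}}(u)-\psi_{e^{it}}(w)$ is nonzero as a polynomial in $(x_j)$ (since $\psi$ is one-to-one on $V(G)$), and hence also nonzero as a trigonometric polynomial in $t$ (the exponentials $e^{i\alpha\cdot t}$ being linearly independent), so it vanishes only on a measure-zero set $Z\subset\T^d$.

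On $\T^d\setminus Z$ the function $t\mapsto\Im f(e^{it})$ is real-analytic, and alternative (2) asserts precisely that it is not identically zero. The identity principle for real-analytic functions then forces its zero set in $\T^d\setminus Z$ to have Lebesgue measure zero, whence $B_{v,u,w}\subset Z\cup\{t:\Im f(e^{it})=0\}$ is itself measure-zero, as required. The only mildly non-routine ingredient is this final appeal, which rests on the standard fact that a nonzero real-analytic function on a connected open subset of $\R^d$ has a zero set of Lebesgue measure zero (a quick induction on $d$ starting from the fact that zeros of a nonzero one-variable real-analytic function are discrete). Everything else is a direct unpacking of Definition~\ref{def:poly emb} and of the discussion immediately preceding the observation.
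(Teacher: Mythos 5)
Your argument is correct and follows essentially the same route as the paper, which proves this observation in the paragraph immediately preceding it (passing to the parametrization $x=e^{it}$ and noting that $\Im f(e^{it})$, unless identically zero, vanishes only on a null set). Your version is in fact slightly more careful: the paper asserts the zero set is ``finite'' in $[-\pi,\pi]^d$, which is inaccurate for $d>1$, whereas your appeal to the identity principle for real-analytic functions (after excising the measure-zero set where the denominator vanishes) gives the correct measure-zero conclusion.
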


{\bf Real rational functions and central palindromicity.} In this section we establish criteria for a rational function to be real. We later apply this together with Observation~\ref{obs: reduction1} to establish the fact that our construction consists of a polynomial embedding.
Let $P$ be a multivariate polynomial with leading monomial~$N$. The ray $P+\alpha N$, for $\alpha\in\R_+$, will be called the \emph{main ray} of $P$.

Denote by $\M^d(x_1,\dots,x_n)$ the set of monomials in $x_1,\dots, x_n$ of total degree at most $d$ and write $\M=\M^{\infty}$. A rational function $g(x_1,\dots,x_n)\in \mathbb{F}(x_1,\dots,x_n)$ with coefficients in a field $\mathbb{F}$ is called \emph{central palindromic} of order $d$, if it can be written as $g\equiv\sum_{M\in \M^d(x_1,\dots,x_n)}{a_M (ML + (ML)^{-1})}$ for some real coefficients $a_M$, where $L^2\in \M^1$ (here the role of $L$ is to allow central palindromic polynomial functions containing monomials of degrees which are half integer multiples).

We say that a monomial $L_0$ is the \emph{symmetric monomial} of $L_1$ relative to $M$ if $L_0L_1\equiv M^2$. When $M$ is understandable from the context, $L_0$ will simply be called the \emph{symmetric monomial of $L_1$}. Observe that symmetric monomials in $P$ relative to $M$ have the same coefficient if and only if $P/M$ is central palindromic.

\begin{propos}\label{propos: palindromicity}
Let $P(x)$ and  $M(x)$  be a polynomial and a monomial with coefficients in $\R$, respectively. Denote
$g(x)=\frac P M (x)$. Then $g$ is real over the unit torus $\T^d$ if and only if $g$ is central palindromic, i.e., the coefficients of symmetric monomials relative to $M$ are equal.
\end{propos}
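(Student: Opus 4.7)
The plan is to exploit the fact that on $\T^d$ complex conjugation acts coordinate-wise by $\overline{x_i} = x_i^{-1}$. Since $P$ and $M$ have real coefficients, this yields $\overline{g(x)} = g(x^{-1})$ on $\T^d$, where $x^{-1} := (x_1^{-1},\dots,x_d^{-1})$. Consequently, $g$ is real on $\T^d$ if and only if $g(x) \equiv g(x^{-1})$ there. Since two rational functions that coincide on the Zariski-dense subset $\T^d \subset (\C^*)^d$ must coincide identically (a Laurent polynomial vanishing on $\T^d$ becomes, via the parametrisation $x_j = e^{it_j}$, a trigonometric polynomial vanishing on $\R^d$, hence the zero polynomial), this torus identity upgrades to an identity of rational functions on $(\C^*)^d$.

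Writing $M(x) = x^\mu$ and $P(x) = \sum_L c_L L$ with $L = x^{a_L}$ ranging over the support of $P$, I clear denominators in the identity $g(x) = g(x^{-1})$ using $M(x^{-1}) = x^{-\mu}$ to obtain the Laurent-polynomial identity
\[
P(x) \;=\; M(x)^2 \cdot P(x^{-1}) \;=\; \sum_L c_L \, x^{2\mu - a_L}.
\]
Matching the coefficient of each Laurent monomial $x^a$ on both sides yields $c_{x^a} = c_{x^{2\mu - a}}$ for every multi-index $a$ (with coefficients of monomials outside the support of $P$ interpreted as zero). Since the symmetric monomial of $L = x^{a_L}$ relative to $M$ is exactly $M^2/L = x^{2\mu - a_L}$, this is precisely the required central palindromic coefficient-symmetry condition.

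For the converse direction, assume $c_L = c_{M^2/L}$ for every monomial $L$. Pair every monomial $L$ in the support of $P$ with its symmetric partner $L_0 = M^2/L$. For each such pair,
\[
\frac{L}{M} + \frac{L_0}{M} \;=\; \frac{L}{M} + \left(\frac{L}{M}\right)^{-1},
\]
and on $\T^d$ the Laurent monomial $z := L/M$ has modulus one, so this expression equals $z + \bar z = 2\,\mathrm{Re}(z)$, which is real. A possible self-symmetric term (when $L = L_0 = M$) contributes the real constant $c_M$. Summing the pair contributions shows $g = P/M$ is real on $\T^d$, as required. The only delicate step in the argument is the passage from equality on the torus to an identity of rational functions; all the rest is bookkeeping of monomial exponents.
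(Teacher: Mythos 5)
Your proof is correct and follows essentially the same route as the paper's: both directions rest on the observation that on $\T^d$ one has $\overline{g(x)}=g(x^{-1})$ for real-coefficient $P,M$, so reality is equivalent to the identity $g(x)\equiv g(x^{-1})$, which after clearing denominators gives exactly the coefficient symmetry $c_L=c_{M^2/L}$. Your explicit justification of the passage from equality on the torus to an identity of Laurent polynomials (via the trigonometric-polynomial/Zariski-density argument) is a detail the paper leaves implicit, but the substance of the argument is the same.
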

\begin{proof}
The fact that a central palindromic function $g$ is real over $\T^d$ follows directly from the fact that for all $x\in \T^d$ we have $g(x)=g(x^{-1})$ which implies that on the unit torus $g(x)=g(\bar x)=\overline{g(x)}$.

To see the other implication, let $g(x)=\frac{P(x)}{M(x)}$ be a rational function taking real values over $\T^d$ and let $d$ be the maximum total degree of $P$. Denote $P(x)=\sum_{L\in \M^d(x)} a_L L$.
Since $g(x)$ is analytic, we have $g(x)=\bar{g}(x)$ so that $g(x)=g(x^{-1})$ on $\T^d$. Hence,
\[\frac{\sum_{L\in \M^d(x)} a_L L}{M} =
\frac{\sum_{L\in \M^d(x)} a_L L^{-1}}{M^{-1}}.\]
From this we obtain,
\begin{equation}\label{eq:coef1}
\sum_{L\in \M^d(x)} a_L L M^{-1}=\sum_{L\in \M^d(x)} a_L L^{-1} M.
\end{equation}
The proposition then follows by comparing the coefficients in \eqref{eq:coef1}.
\end{proof}

We further observe that central palindromicity is preserved under substitution of monomials.
\begin{obs}\label{obs:palindromei is hereditary}
For any central palindromic function $g(x)$ and any vector $M=(M_1,\dots, M_n)$ of monomials of the variables $y=(y_1,\dots, y_m)$, the function $g(M(y))$ is a central palindromic function.
\end{obs}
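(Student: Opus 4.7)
The plan is to unwind the definition of central palindromicity and verify directly that the substitution preserves the required form. Suppose $g(x)$ is central palindromic of order $d$, so that
\[
g(x) \;=\; \sum_{K\in\M^d(x_1,\dots,x_n)} a_K\bigl(KL + (KL)^{-1}\bigr)
\]
for some real coefficients $a_K$ and some $L$ with $L^2\in\M^1$. Substituting $x_i=M_i(y)$ in each term and using the elementary fact that the composition of a monomial with a tuple of monomials is again a monomial, I obtain that $K'(y):=K(M(y))$ and $L'(y):=L(M(y))$ satisfy $K'(y)\in\M(y)$ and $L'(y)^2=L^2(M(y))\in\M(y)$. Thus
\[
g(M(y)) \;=\; \sum_{K\in\M^d(x)} a_K\bigl(K'(y)L'(y) + (K'(y)L'(y))^{-1}\bigr).
\]

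Distinct monomials $K$ in $x$ may collapse to the same monomial $K'$ in $y$ after substitution, so I would regroup the sum by setting $b_{K'} := \sum_{K\,:\,K(M(y))=K'(y)} a_K$. This is a finite sum of reals and therefore real. The resulting expression
\[
g(M(y)) \;=\; \sum_{K'} b_{K'}\bigl(K'L' + (K'L')^{-1}\bigr)
\]
is the required central palindromic representation (where the order is now bounded by a constant depending on $d$ and on the degrees of the $M_i$).

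There is essentially no substantive obstacle; the observation is a bookkeeping exercise leveraging the closure of monomials under composition. For brevity one could alternatively argue via Proposition~\ref{propos: palindromicity}: the monomial map $y\mapsto M(y)$ carries $\T^m$ into $\T^n$, so realness of $g$ on $\T^n$ forces realness of $g\circ M$ on $\T^m$; since $g\circ M$ retains the form polynomial-over-monomial (after clearing Laurent denominators by multiplying through by a suitable monomial), the converse direction of Proposition~\ref{propos: palindromicity} then yields central palindromicity, avoiding the need to track the auxiliary factor $L$ explicitly.
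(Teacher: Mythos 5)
The paper states this observation without proof, treating it as immediate, and your unwinding of the definition --- composition of monomials with monomials yields monomials, and regrouping collided terms keeps the coefficients real --- is exactly the intended argument; your alternative route through Proposition~\ref{propos: palindromicity} (a monomial map sends $\T^m$ into $\T^n$, so realness is inherited, and the polynomial-over-monomial form is preserved) is equally valid. The only point you gloss over is that after substitution one gets $L'^2=L^2(M(y))\in\M(y)$ rather than $\M^1(y)$ as the definition literally requires, but this is a harmless normalization matter (and in the paper's sole application each $M_i$ is a single variable $y_k$, so it never arises).
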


We also required the following counterpart of proposition~\ref{propos: palindromicity} for quotient of a polynomial and a certain class of binomials.
\begin{propos}\label{prop:simetric factors B}
Let $P(x)$ be a complex polynomial and $M(x)$ be a monomial. Denote
$g(x)=\frac{P(x)}{M(x)(1-x_1^{-1})}$. Then $g$ is real over $\T^n$ if and only if it is central palindromic, i.e., the coefficients of monomials in $P$ which are symmetric relative to $\frac{M}{\sqrt {x_1}}$ are additive inverses.
\end{propos}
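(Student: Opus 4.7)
The plan is to adapt the proof of Proposition~\ref{propos: palindromicity}, using the identity $1 - x_1^{-1} = -x_1^{-1}(1 - x_1)$ to rewrite
\[g(x) \;=\; \frac{-x_1\, P(x)}{M(x)(1-x_1)}.\]
Under the substitution $x \mapsto x^{-1}$, the factor $(1-x_1)$ in the denominator transforms into $(1-x_1^{-1})$, which is again $(1-x_1)$ up to a factor of $-x_1^{-1}$. Thus comparing $g(x)$ with $g(x^{-1})$ on the torus will reduce to a polynomial identity involving only the numerators, shifted by an extra factor of $x_1$.

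For the forward direction, I would use that since $g$ is real on $\T^n$ and analytic where defined, $g(x) = \overline{g(x)} = g(x^{-1})$ on $\T^n$. Substituting the above and clearing denominators yields the identity
\[-x_1\, P(x)\, M(x^{-1}) \;=\; M(x)\, P(x^{-1}).\]
Writing $P(x) = \sum_L a_L L$ and comparing the coefficients of an arbitrary monomial $K$: the left-hand side contributes $-a_{L_0}$ with $L_0 = MK/x_1$, while the right-hand side contributes $a_{L_1}$ with $L_1 = M/K$. These monomials satisfy
\[L_0\, L_1 \;=\; M^2/x_1 \;=\; (M/\sqrt{x_1})^2,\]
so $L_0$ and $L_1$ are precisely the monomials symmetric to each other relative to $M/\sqrt{x_1}$, and one concludes $a_{L_0} = -a_{L_1}$.

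For the converse, the coefficient condition allows us to group the terms of $P$ into pairs of the form $a_{L_0}(L_0 - L_1)$ with $L_0 L_1 = M^2/x_1$; this directly yields the polynomial identity above, whose steps reverse to give $g(x) = g(x^{-1})$ on $\T^n$, hence $g$ is real there.

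The main subtlety I expect, though minor, is handling the pole of $g$ along $x_1 = 1$ carefully: the identity $-x_1 P(x) M(x^{-1}) = M(x) P(x^{-1})$ must be interpreted as one of Laurent polynomials rather than as a pointwise identity on the torus, and one must track how the extra factor of $x_1$ coming from $1 - x_1^{-1} = -x_1^{-1}(1-x_1)$ is absorbed into $M$. This is exactly the source of the shift by $\sqrt{x_1}$ in the symmetry axis, distinguishing this statement from Proposition~\ref{propos: palindromicity}.
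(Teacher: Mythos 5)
Your proposal is correct and follows essentially the same route as the paper: both use $g(x)=g(x^{-1})$ on the torus, clear the binomial factor via the identity $\frac{1-x_1}{1-x_1^{-1}}=-x_1$, and compare coefficients of the resulting Laurent-polynomial identity to conclude that monomials symmetric relative to $M/\sqrt{x_1}$ carry additive-inverse coefficients. Your remark about interpreting the cleared identity as one of Laurent polynomials (extending from the dense subset of $\T^n$ where $x_1\neq 1$) is a fair point that the paper leaves implicit.
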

\begin{proof}
As the proof of Proposition~\ref{propos: palindromicity}, we observe that
a central palindromic function $g$ is real on $\T^d$ if and only if $g(x)=g(x^{-1})$.

To see the other implication, let $g(x)=\frac{P(x)}{M(x)(1-x_1^{-1})}$ be a rational complex function taking real values over $\T^n$ and let $d$ be the maximum total degree of $P$. Denote $P(x)=\sum_{L\in \M^d(x)} a_L L$.
As before, we have $g(x)=g(x^{-1})$ on $\T^n$ so that
\[\frac{\sum_{L\in \M^d(x)} a_L L}{M(1-x_1^{-1})} =
\frac{\sum_{L\in \M^d(x)} a_L L^{-1}}{M^{-1}(1-x_1)}.\]
Hence, multiplying by $(1-x_1)$ and using the fact that $\frac{1-x}{1-x^{-1}}=-x$, we obtain
\begin{equation}\label{eq:coef2}
\sum_{L\in \M^d(x)} a_L L \frac{\sqrt{x_1}}{M}=\sum_{L\in \M^d(x)} - a_L L^{-1} \frac{M}{\sqrt{x_1}}.
\end{equation}
The proposition then follows by comparing the coefficients in \eqref{eq:coef2}.
\end{proof}

Our construction will map vertices of $H^*$ to polynomials satisfying several properties. These are summarized in the following definition.

\begin{defin}\label{defin: 3 claimbing polynom-b}
 A polynomial $P\in \R[x_0,x_1,x_2]$  with non-negative coefficients is called \textbf{$(x_0,x_1,x_2)$-ascending} if it can be represented as
\[P=\sum_{j=1}^{m} \prod_{i=1}^{j-1} x_{i\text{ mod }3}^{a_i}\left(b_{j,0}x^{a_j-1}_{j\text{ mod }3}+b_{j,1}x^{a_j}_{j\text{ mod }3}\right)\]
such that for every $j\le m$, the following holds:
\begin{enumerate}
\item\label{defin: 3 claimbing polynom-b-item0} $a_j\in \N$, $b_j\in\N_0$.
\item\label{defin: 3 claimbing polynom-b-item1} $b_{m,1}>0$.
\item\label{defin: 3 claimbing polynom-b-item3} $a_{j}=1$ if and only if $b_{j-1,1} >0$ and $b_{j,0}=0$.
\item\label{defin: 3 claimbing polynom-b-item4} If $a_{j}>1$ for $j<m$ then $b_{j,0}>0$.
\end{enumerate}
\end{defin}
\begin{obs}\label{obs: properties of ascending}
If $P$ is \emph{$(x_0,x_1,x_2)$-ascending} then
\begin{enumerate}
\item\label{obs: properties of ascending:it1} Monomials in $P$ are ordered by their degree so that the degrees of consecutive monomials  in each variables are weakly increasing. Denote this order by $\prec$.
We say that monomials $N,N'$ are consecutive in $P$ if they have non-zero coefficients, $N\prec N'$ and there is no $N''$ in $P$ with non-zero coefficient such that $N\prec N''\prec N'$.
\item\label{obs: properties of ascending:it2} Let $N\prec N'$ be two consecutive monomials in $P$. Then $\frac{N'}{N}$ is a bivariate monomial of the form
\begin{equation}\label{eq: purpose}
\frac{N'}{N}=x_{j\text{ mod }3}x^k_{j+1\text{ mod }3},
\end{equation}
for some $k\in \N_0$.
\item\label{obs: properties of ascending:it3} Let $N\prec N'\prec N''$ be three consecutive monomials in $P$.
Then $\frac{N'}{N}$ and $\frac{N''}{N'}$ are bivariate monomials of the form
\begin{equation*}\label{eq: purpose2}
\begin{split}
\frac{N'}{N}  &=x_{j\text{ mod }3}x^{k_1}_{j+1\text{ mod }3},\\
\frac{N''}{N'}&=x_{j+1\text{ mod }3}x^{k_2}_{j+2\text{ mod }3},
\end{split}
\end{equation*}
for some $k_1,k_2\in \N_0$.
\end{enumerate}
\end{obs}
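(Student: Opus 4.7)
The plan is to verify each of the three items by direct analysis of the monomial structure of an $(x_0,x_1,x_2)$-ascending polynomial $P$. For $j\in[m]$ and $s\in\{0,1\}$ introduce the potential monomial
$M_{j,s}:=\prod_{i=1}^{j-1} x_{i\bmod 3}^{a_i}\cdot x_{j\bmod 3}^{a_j-1+s},$
so that $P=\sum_{j=1}^m(b_{j,0}M_{j,0}+b_{j,1}M_{j,1})$ and the monomials appearing in $P$ with non-zero coefficient are exactly the $M_{j,s}$ with $b_{j,s}>0$. The first step is to observe that two distinct pairs $(j,s)\ne(j',s')$ can yield the same monomial only when $(j',s')=(j+1,0)$ and $a_{j+1}=1$; the ``iff'' in condition~(3) of the definition then forces $b_{j+1,0}=0$, so two surviving index pairs never collide.

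For item~\ref{obs: properties of ascending:it1}, I would order the non-zero monomials lexicographically by $(j,s)$. Since the multidegree of $M_{j,s}$ equals $a_i$ in cyclic position $i<j$, equals $a_j-1+s$ in cyclic position $j$, and vanishes in every cyclic position greater than $j$, every variable's degree is weakly increasing along this ordering, yielding the desired $\prec$.

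For item~\ref{obs: properties of ascending:it2}, I would split consecutive pairs $N\prec N'$ into within-block and between-block transitions. The within-block case $M_{j,0}\to M_{j,1}$ gives ratio $x_{j\bmod 3}$, matching the claimed form with $k=0$. For the between-block case, with $M_{j,s}$ the last surviving monomial of block~$j$ and $M_{j+1,s'}$ the first of block~$j+1$, the ratio is a monomial in $x_{j\bmod 3}$ and $x_{(j+1)\bmod 3}$ whose exponent pattern is dictated by which of $b_{j,1}$ and $b_{j+1,0}$ vanish. I would exhaust these sub-cases and use conditions~(3) and~(4) to verify that the resulting ratio always takes the form $x_{k\bmod 3}\, x_{(k+1)\bmod 3}^\ell$ for an appropriate $k$ and some $\ell\ge 0$.

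Item~\ref{obs: properties of ascending:it3} then follows by examining triples $N\prec N'\prec N''$. The classification from item~\ref{obs: properties of ascending:it2} equips each consecutive pair with a leading cyclic index, namely the one whose variable carries exponent one; the required cyclic shift $k\mapsto k+1$ between the two pairs is then read off from the same within-block/between-block decomposition. The main obstacle I anticipate is the between-block analysis in item~\ref{obs: properties of ascending:it2}: ensuring a unique degree-one variable in the ratio requires combining the ``iff'' in condition~(3) with the positivity guarantee of condition~(4) to exclude configurations that would otherwise produce a ratio with no unique degree-one factor.
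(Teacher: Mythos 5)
Your plan is correct and follows essentially the same route as the paper: a case analysis on consecutive monomials in the ascending representation, splitting within-block from between-block transitions and invoking conditions (3) and (4) of the definition to pin down the exponent pattern of the ratios (the paper's proof organizes the cases by $a_{j+1}>1$ versus $a_{j+1}=1$ rather than by which of $b_{j,1}$, $b_{j+1,0}$ vanish, but this is the same analysis). Your preliminary collision check for $M_{j,1}=M_{j+1,0}$ when $a_{j+1}=1$ is a worthwhile explicit addition that the paper leaves implicit.
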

\begin{proof} Throughout the proof we omit the modulo $3$ in the indices of $x$.
The first item is straightforward from the definition.
To see the second item, let $j\in M$ and $\ell\in \{0,1\}$ be such that
\[N=\left(\prod_{i=1}^{j-1} x_{i}^{a_i}\right)x^{a_j-1+\ell}_{j}\]
We consider two cases. If $a_{j+1}>1$, then, by Item~\ref{defin: 3 claimbing polynom-b-item4} of Definition~\ref{defin: 3 claimbing polynom-b}, we have $b_{j+1,0}> 0$ so that
\[N'\prec \left(\prod_{i=1}^{j} x_{i}^{a_i}\right)x^{a_{j+1}-1}_{j}\]
and the item follows. Otherwise $a_{j+1}=1$ so that,  by Item~\ref{defin: 3 claimbing polynom-b-item3} of Definition~\ref{defin: 3 claimbing polynom-b} applied to $j+1$, we obtain $b_{j,1}>0$, so that if $\ell=0$ the proposition follows. We are left with the case $\ell=1$. In this case, if $b_{j+1,1}=1$, we obtain
\[N'=\left(\prod_{i=1}^{j} x_{i}^{a_i}\right)x^{a_{j+1}}_{j}\]
and the second item follows.  On the other hand, if $b_{j+1,1}=0$, then, by Item~\ref{defin: 3 claimbing polynom-b-item3} of Definition~\ref{defin: 3 claimbing polynom-b} applied to $j+2$, we have $a_{j+2}>1$ so that, by items~\ref{defin: 3 claimbing polynom-b-item1}
and~\ref{defin: 3 claimbing polynom-b-item4} either $b_{j+1,0}>0$  or $b_{j+1,1}>0$. In either case we obtain
\[N'=\left(\prod_{i=1}^{j} x_{i}^{a_i}\right)x_{j+1}x^k_{j+2}\]
for some $k\in \N_0$. The second item follows.
The third item follows from similar arguments, applied to three consecutive terms.
\end{proof}

\begin{propos}\label{prop:x123-polynom-b}
Let $P,Q\in \R[x_0,x_1,x_2]$  be two $(x_0,x_1,x_2)$-ascending polynomials with coefficients over $\N_0$. 
Assume that $\deg(Q)\le \deg(P)$ and that
$\frac{P-Q}{M}(x_0,x_1,x_2)$ is central palindromic, and let $M$ be the leading monomial of $Q$. Then $P-Q=kM$ for some $k\in\Z$.
\end{propos}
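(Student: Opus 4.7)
The plan is to assume $R := P - Q$ is not a scalar multiple of $M$ and derive a contradiction by choosing a maximum-degree monomial $L^{*} \in \mathrm{supp}(R) \setminus \{M\}$. First, via Proposition~\ref{propos: palindromicity}, I would re-express the central palindromic hypothesis as $[L]R = [M^{2}/L]R$ for every monomial $L$; in particular, any $L \in \mathrm{supp}(R)$ must satisfy $L \mid M^{2}$, else $M^{2}/L$ has negative exponents and the identity forces $[L]R = 0$. Combined with Observation~\ref{obs: properties of ascending}(\ref{obs: properties of ascending:it1}) and the strict degree-increase implicit in items (\ref{defin: 3 claimbing polynom-b-item3})--(\ref{defin: 3 claimbing polynom-b-item4}) of the ascending definition, each ascending polynomial has at most one monomial per total degree, a fact I use throughout.

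The case $\deg L^{*} = \deg M$ is immediate: the degree-$\deg M$ slice of $\mathrm{supp}(P) \cup \mathrm{supp}(Q)$ contains at most two monomials, namely $M$ (from $Q$) and possibly one $P$-monomial at this degree, so palindromicity forces $M^{2}/L^{*} \in \{M, L^{*}\}$; either alternative yields $L^{*} = M$, contradicting the choice of $L^{*}$.

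The case $\deg L^{*} > \deg M$ is the crux. Here $Q$ contributes nothing past degree $\deg M$, so strict degree-monotonicity along $P$'s chain forces $L^{*} = \mathrm{lead}(P)$ with positive coefficient $b^{P}_{m,1}$. Palindromicity then requires $M^{2}/L^{*}$ to be a valid monomial (whence $L^{*} \mid M^{2}$ coordinatewise) lying in $\mathrm{supp}(P) \cup \mathrm{supp}(Q)$ at strictly lower degree than $M$. To rule this out I would invoke Observation~\ref{obs: properties of ascending}(\ref{obs: properties of ascending:it2})--(\ref{obs: properties of ascending:it3}): each step of an ascending chain multiplies the current monomial by $x_{j \bmod 3}\, x_{(j+1) \bmod 3}^{k}$ with the cyclic index $j$ advancing by one mod $3$ at each step, so along any chain the exponent vector grows monotonically in a fixed cyclic order. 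The exponent vector of $L^{*}$, together with $L^{*} \mid M^{2}$, pins down the possible exponent vector $2m - l^{*}$ of the putative partner $M^{2}/L^{*}$; one then shows, by a case split indexed by whether $M$ itself lies on $P$'s chain and by the cyclic positions mod~$3$ of the final terms of $P$ and $Q$, that $2m - l^{*}$ cannot match any monomial along either ascending chain. This combinatorial bookkeeping in Case~2 will be the main obstacle; the earlier reductions and Case~1 are routine.
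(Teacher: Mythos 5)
Your setup is sound: reading central palindromicity as the coefficient identity $[L]R=[M^2/L]R$, discarding monomials $L$ with $L\nmid M^2$, and using that an ascending polynomial has at most one support monomial per total degree are all correct, and your case $\deg L^*=\deg M$ works. (You never address $\deg L^*<\deg M$, where the partner would have degree $>\deg M$ and so, by maximality of $L^*$, could only be $M$ itself, which has the wrong degree; this is routine but should be said.) The genuine gap is your Case~2, and it is not merely unfinished bookkeeping: the claim you plan to establish there is false as stated. Using only the single palindromicity constraint attached to $L^*=\mathrm{lead}(P)$, you would need that $M^2/\mathrm{lead}(P)$ cannot occur on $P$'s ascending chain. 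But it can: a chain segment $L',\ L'x_0x_1^3,\ L'x_0x_1^4x_2,\ L'x_0^2x_1^4x_2^2$ is consistent with Observation~\ref{obs: properties of ascending} (the cyclic index advances $0\to 1\to 2$), and its endpoints multiply to $\bigl(L'x_0x_1^2x_2\bigr)^2$, i.e.\ to $M^2$ for a monomial $M$ of the intermediate degree. So no case split on cyclic positions of the \emph{endpoints} alone can produce the contradiction; one must bring in the palindromicity constraints on the other monomials of degree $\ge\deg M$, which your plan never invokes.

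The paper's proof diverges at exactly this point: instead of the maximal monomial it takes $N$, the $\prec$-minimal support monomial of $P$ of degree $\ge\deg M$ with $N\neq M$. After first showing that the symmetric partner of every such monomial must itself appear with positive coefficient in $P$ (because $Q$ contributes only non-positively to $P-Q$), minimality forces $N$'s partner $N'$ to be either consecutive to $N$ in $\mathrm{supp}(P)$ or separated from it only by $M$. Items~\ref{obs: properties of ascending:it2} and~\ref{obs: properties of ascending:it3} of Observation~\ref{obs: properties of ascending} then kill both options: a single consecutive ratio $x_jx_{j+1}^k$ cannot equal the perfect square $N/N'=(M/N')^2$, and two consecutive ratios $x_jx_{j+1}^{k_1}$ and $x_{j+1}x_{j+2}^{k_2}$ cannot coincide, as symmetry would require. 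If you replace ``maximal'' by ``minimal above $M$'' and add this adjacency step, your argument closes; as written, it does not.
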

\begin{proof} Throughout the proof we omit the modulo $3$ in the indices of $x$.
Denote $d=\deg(M)$, by $\cP$ the collection of all
monomials with positive coefficients in $P$ which are of degree greater or equal than $d$,
and by $\bar{\cP}$ the collection monomials which are symmetric to monomials in $\cP$
with respect to $M$. Observe that the coefficient of every $N\in \cP\setminus \{M\}$ in $P-Q$ is the same
as its coefficient in $P$. By Proposition~\ref{propos: palindromicity} symmetric monomials of
$P-Q$ have the identical coefficients so that all monomials in $\bar{\cP}$ have a positive coefficient in $P-Q$.
Since the only positive contribution to monomials in $P-Q$ comes from their coefficient in $P$,
we deduce that $\bar{\cP}$ are monomials of positive coefficients in $P$.

Our purpose is to show that $\cP\subset \{M\}$, as this would imply that $P-Q$ has no monomials of positive coefficients
of degree greater than $d$. This would imply, by Proposition~\ref{propos: palindromicity}, that $P-Q=cM$ for $c\in\Z$ as
required. To this end assume towards obtaining a contradiction that $\cP\setminus \{M\}\neq \emptyset$.
Recall Item~\ref{obs: properties of ascending:it1} of Observation~\ref{obs: properties of ascending} and observe that every monomial in $\cP$ is lexicographically greater than $M$, since otherwise, its symmetric monomial will not be lexicographically ordered with respect to it.
Let $N$ be the minimal monomial greater than $M$ in $\cP$ and denote by $N'$ its symmetric monomial with respect to $M$,
Then either $N'$ and $N$ are consecutive in $P$, or $N'$ is followed by $M$ which is then followed by $N$.
The former case is impossible by Item~\ref{obs: properties of ascending:it2} of Observation~\ref{obs: properties of ascending}, since it would imply that
$\frac{N}{N'}$ is of the form $x_{j}x^k_{j+1}$ in which case $N'$ and $N$ cannot be symmetric.
The latter case could also be ruled out by observing that Item~\ref{obs: properties of ascending:it3}
of Observation~\ref{obs: properties of ascending}, does not allow $N$ and $N'$ to be symmetric.
\end{proof}

\section{Polynomial embedding of degree 12 of all outerplanar graphs}\label{sec: Two dist}

In this section we prove Proposition~\ref{prop: 13 degenerate} and thus
Theorem~\ref{thm: 13 degenerate}. To do so, we introduce in Section~\ref{subs:
psi} a $12$-polynomial embedding
$\psi=\psi(x): T^* \to \C[x]$,
where
\begin{equation}\label{eq: T defin}
x= (x_i)_{i\in I}\ \  ,\ \  I=\{0,1\}^2\times \{0,1,2\}.
\end{equation} In Section~\ref{subs: psi
img} we then write an explicit formula for the image of every vertex $v$
under $\psi$. This we do using the QR-encoding introduced in the
preliminaries section. In Section~\ref{subs: psi poly embd} we prove that
$\psi$ is a degenerate polynomial embedding.
In Section~\ref{subs: psi poly embd2} we further show that $\psi$ is
a polynomial embedding.
 Finally, in Section~\ref{subs: wrap up degenerate} we
conclude the proof of Proposition~\ref{prop: 13 degenerate}.

\subsection{The definition of \texorpdfstring{$\psi$}{psi}}\label{subs: psi} In this section we
define $\psi$. The construction is a somewhat more elaborate variant of the construction used in
\cite{AF}. We start by presenting $\psi_H(x)$, a $1$-polynomial embedding of $H$ which embeds the
rhombus graph onto a rhombus of side length $1$ with angle $x$ (identifying
the complex number $x$ with its angle on the unit circle). We then define a type function $\ty$ on each node $v\in T^*$
and a map $\psi$, which maps the rhombus encoded by $\QR(v)$ to a rotate copy of $\psi_H(x_{\ty})$ in the only way which
respects $\pi$, as defined in Section~\ref{sec:4}. The image of several subsets of $T^*$ through
$\psi$ is depicted in Figure~\ref{fig: psi T-star}.

\textbf{Polynomial embedding of a single rhombus.} We set $\psi_H(x)(v_0)=0$, $\psi_H(x)(v_1)=1$, $\psi_H(x)(v_2)=x$ and
$\psi_H(x)(v_3)=x+1$. This is indeed a polynomial embedding, mapping the
rhombus graph to a rhombus of edge length $1$, whose $v_1v_0v_2$ angle is
$x$. Figure~\ref{fig: phi} illustrates the image of $H$ under $\psi_H$.

  \begin{figure}[htb!]
   \centering%
   \begin{picture}(100,100)
   \put(0,0){\includegraphics[scale=3]{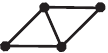}}
      \put(-5,-10){$0$}
      \put(55,-10){$1$}
      \put(30,45){$x$}
      \put(90,45){$x+1$}
   \end{picture}
   \caption{The image of $H$ under $\psi_H$. Observe how $x$ determines the $v_1v_0v_2$ angle of the rhombus.}
   \label{fig: phi}
   \end{figure}

\textbf{Defining $\ty(H)$, the type function.} Let $N$ be a vertex of $H^*$ with the $\QR$ encoding
\[QR(N)=\big((q_{i})_{i\in[m+2]}\  ,(\rho_{i})_{i\in[m+1]}\big),\] for $m\ge 0$. We set
\begin{equation}\label{eq: ty}
\ty(N)=\big(q_m(\hspace{-8pt}\mod 2),\rho_m(\hspace{-8pt} \mod 2),s_m(\hspace{-8pt}\mod 3)\big),
\end{equation}
where
\begin{equation}\label{eq: s}
s_m= \sum_{i=1}^m \ind\{q_{i-1}+\rho_i>0\},
\end{equation}
and set
$\ty(H^*_{\text{1}})=(0,0,0)$.

\textbf{Definition  of $\psi$ and $\psi_x$}.
Set $\psi(\pi(H^*_{\text{1}}))=\psi_H(x_{0,0,0})(H)$. Let $M,N\in H^*$ be
a pair of nodes such that $(M,N)$ is an arc of $H^*$, and assume that $\psi$ is
already defined on the vertices of $\pi(M)$. By the definition of $\pi$, this
implies that $\psi(\pi(N,v_0))$ and $\psi(\pi(N,v_1))$ are already defined. We
then define $\psi(\pi(N,v_2)),\psi(\pi(N,v_3))$ so that
$\psi(\pi(N,v_0)),\psi(\pi(N,v_1)),\psi(\pi(N,v_2)),\psi(\pi(N,v_3))$ form a
translated and rotated copy of $H(x_{\ty(N)})$. Because every vertex in $H^*$, except from vertices of the root edge, appears exactly once as either $v_2$ or $v_3$ in some node (Observation~\ref{obs: once as 2-3}), this is a well defined map from $T^*$ to functions from $\mathbb{T}^d$ to $\C$. For every vector $x=(x_i)_{i\in I}$ of points on the unit circle, this map reduces to a map from $T^*$ to $\C$, which we denote by $\psi_x$.

  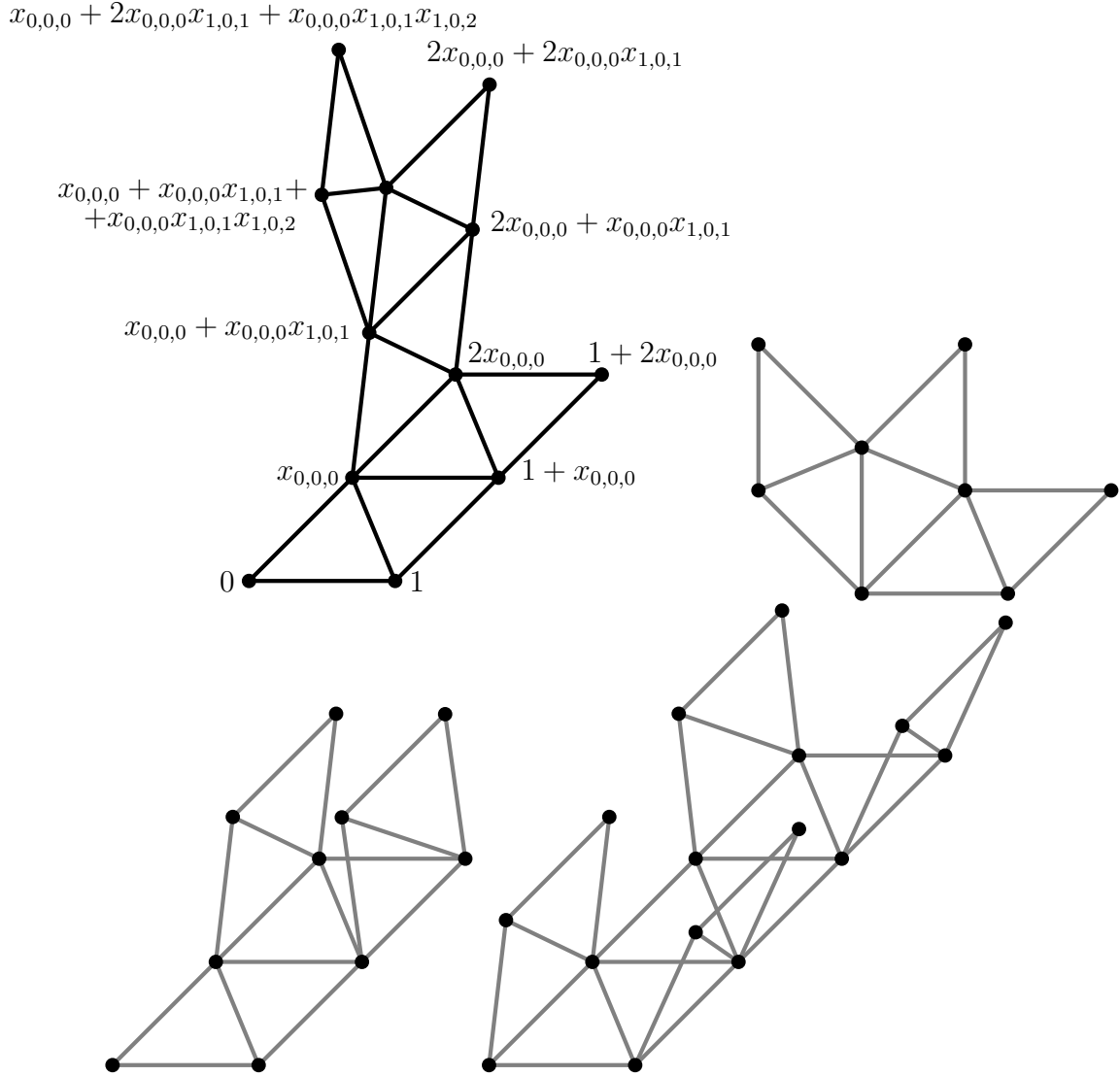
\begin{figure}[htb!]
   \centering%
\begin{tikzpicture}
\begin{scope}[ultra thick, black]
\coordinate[label={[xshift=-3mm,yshift=-3mm]\large $0$}]  (a) at (0,0);
\coordinate[label={[xshift=-6mm,yshift=-3mm]\large $x_{0,0,0}$}] (b) at (2*0.707,2*0.707); 
\coordinate[label={[xshift=1.1cm,yshift=-3mm]\large $1+x_{0,0,0}$}] (c) at (2+2*0.707,2*0.707);
\coordinate[label={[xshift=3mm,yshift=-3mm]\large $1$}] (d) at (2,0);
\coordinate[label={[xshift=7mm,yshift=-1mm]\large $2x_{0,0,0}$}] (e) at (4*0.707,4*0.707);
\coordinate[label={[xshift=7mm,yshift=-1mm]\large $1+2x_{0,0,0}$}]  (f) at (2+4*0.707,4*0.707);
\coordinate[label={[xshift=-18mm,yshift=-3mm]\large $x_{0,0,0}+x_{0,0,0}x_{1,0,1}$}]  (g) at (2*0.707+2*0.116,2*0.707+2*0.993);
\coordinate[label={[xshift=19mm,yshift=-3mm]\large $2x_{0,0,0}+x_{0,0,0}x_{1,0,1}$}] (h) at (4*0.707+2*0.116,4*0.707+2*0.993);
\coordinate (i) at (2*0.707+4*0.116,2*0.707+4*0.993);
\coordinate[label={[xshift=9mm,yshift=0.5mm]\large $2x_{0,0,0}+2x_{0,0,0}x_{1,0,1}$}] (j) at (4*0.707+4*0.116,4*0.707+4*0.993);
\coordinate[label={[xshift=-19mm,yshift=-3mm]\large $x_{0,0,0}+x_{0,0,0}x_{1,0,1}+$}] (k) at (2*0.707+2*0.116-2*0.325,2*0.707+2*0.993+2*0.945);
\coordinate[label={[xshift=-18mm,yshift=-7mm]\large $+x_{0,0,0}x_{1,0,1}x_{1,0,2}$}] (k2) at (2*0.707+2*0.116-2*0.325,2*0.707+2*0.993+2*0.945);

\coordinate[label={[xshift=-13mm,yshift=1mm]\large $x_{0,0,0}+2x_{0,0,0}x_{1,0,1}+x_{0,0,0}x_{1,0,1}x_{1,0,2}$}] (l) at (2*0.707+4*0.116-2*0.325,2*0.707+4*0.993+2*0.945);
\draw[postaction={decorate}] (a) -- (b);
\draw[postaction={decorate}] (b) -- (c);
\draw[postaction={decorate}] (d) -- (b);
\draw[postaction={decorate}] (d) -- (c);
\draw[postaction={decorate}] (a) -- (d);

\draw[postaction={decorate}] (b) -- (e);
\draw[postaction={decorate}] (e) -- (f);
\draw[postaction={decorate}] (c) -- (f);
\draw[postaction={decorate}] (c) -- (e);

\draw[postaction={decorate}] (b) -- (g);
\draw[postaction={decorate}] (g) -- (h);
\draw[postaction={decorate}] (e) -- (h);
\draw[postaction={decorate}] (e) -- (g);

\draw[postaction={decorate}] (g) -- (i);
\draw[postaction={decorate}] (i) -- (j);
\draw[postaction={decorate}] (h) -- (j);
\draw[postaction={decorate}] (h) -- (i);

\draw[postaction={decorate}] (g) -- (k);
\draw[postaction={decorate}] (k) -- (l);
\draw[postaction={decorate}] (i) -- (l);
\draw[postaction={decorate}] (i) -- (k);

\filldraw[black] (a) circle (2pt);
\filldraw[black] (b) circle (2pt);
\filldraw[black] (c) circle (2pt);
\filldraw[black] (d) circle (2pt);
\filldraw[black] (e) circle (2pt);
\filldraw[black] (f) circle (2pt);
\filldraw[black] (g) circle (2pt);
\filldraw[black] (h) circle (2pt);
\filldraw[black] (i) circle (2pt);
\filldraw[black] (j) circle (2pt);
\filldraw[black] (k) circle (2pt);
\filldraw[black] (l) circle (2pt);

\end{scope}
\end{tikzpicture}
\begin{tikzpicture}
\begin{scope}[ultra thick, gray]
\coordinate (a) at (0,0);
\coordinate (b) at (2*0.707,2*0.707); 
\coordinate (c) at (2+2*0.707,2*0.707);
\coordinate (d) at (2,0);
\coordinate (e) at (0,2);
\coordinate (f) at (2*0.707,2*0.707+2);
\coordinate (g) at (-2*0.707,2*0.707);
\coordinate (h) at (-2*0.707,2*0.707+2);
\draw[postaction={decorate}] (a) -- (b);
\draw[postaction={decorate}] (b) -- (c);
\draw[postaction={decorate}] (d) -- (b);
\draw[postaction={decorate}] (d) -- (c);
\draw[postaction={decorate}] (a) -- (d);

\draw[postaction={decorate}] (a) -- (e);
\draw[postaction={decorate}] (e) -- (f);
\draw[postaction={decorate}] (b) -- (f);
\draw[postaction={decorate}] (b) -- (e);

\draw[postaction={decorate}] (a) -- (g);
\draw[postaction={decorate}] (g) -- (h);
\draw[postaction={decorate}] (e) -- (h);
\draw[postaction={decorate}] (e) -- (g);

\filldraw[black] (a) circle (2pt);
\filldraw[black] (b) circle (2pt);
\filldraw[black] (c) circle (2pt);
\filldraw[black] (d) circle (2pt);
\filldraw[black] (e) circle (2pt);
\filldraw[black] (f) circle (2pt);
\filldraw[black] (g) circle (2pt);
\filldraw[black] (h) circle (2pt);

\end{scope}
\end{tikzpicture}
\begin{tikzpicture}
\begin{scope}[ultra thick, gray]
\coordinate (a) at (0,0);
\coordinate (b) at (2*0.707,2*0.707); 
\coordinate (c) at (2+2*0.707,2*0.707);
\coordinate (d) at (2,0);
\coordinate (e) at (4*0.707,4*0.707);
\coordinate (f) at (2+4*0.707,4*0.707);
\coordinate (g) at (2*0.707+2*0.116,2*0.707+2*0.993);
\coordinate (h) at (4*0.707+2*0.116,4*0.707+2*0.993);
\coordinate (i) at (2+2*0.707-2*0.138,2*0.707+2*0.99);
\coordinate (j) at (2+4*0.707-2*0.138,4*0.707+2*0.99);
\draw[postaction={decorate}] (a) -- (b); 
\draw[postaction={decorate}] (b) -- (c);
\draw[postaction={decorate}] (d) -- (b);
\draw[postaction={decorate}] (d) -- (c);
\draw[postaction={decorate}] (a) -- (d);

\draw[postaction={decorate}] (b) -- (e);
\draw[postaction={decorate}] (e) -- (f);
\draw[postaction={decorate}] (c) -- (f);
\draw[postaction={decorate}] (c) -- (e);

\draw[postaction={decorate}] (b) -- (g);
\draw[postaction={decorate}] (g) -- (h);
\draw[postaction={decorate}] (e) -- (h);
\draw[postaction={decorate}] (e) -- (g);

\draw[postaction={decorate}] (c) -- (i);
\draw[postaction={decorate}] (i) -- (j);
\draw[postaction={decorate}] (f) -- (j);
\draw[postaction={decorate}] (f) -- (i);

\filldraw[black] (a) circle (2pt);
\filldraw[black] (b) circle (2pt);
\filldraw[black] (c) circle (2pt);
\filldraw[black] (d) circle (2pt);
\filldraw[black] (e) circle (2pt);
\filldraw[black] (f) circle (2pt);
\filldraw[black] (g) circle (2pt);
\filldraw[black] (h) circle (2pt);
\filldraw[black] (i) circle (2pt);
\filldraw[black] (j) circle (2pt);

\end{scope}
\end{tikzpicture}
\begin{tikzpicture}
\begin{scope}[ultra thick, gray]
\coordinate (a) at (0,0);
\coordinate (b) at (2*0.707,2*0.707); 
\coordinate (c) at (2+2*0.707,2*0.707);
\coordinate (d) at (2,0);
\coordinate (e) at (4*0.707,4*0.707);
\coordinate (f) at (2+4*0.707,4*0.707);
\coordinate (i) at (0*0.707+2*0.116,0*0.707+2*0.993);
\coordinate (j) at (2*0.707+2*0.116,2*0.707+2*0.993);
\coordinate (g) at  (6*0.707,6*0.707);
\coordinate (h) at (2+6*0.707,6*0.707);
\coordinate (k) at (2+0*0.707+2*0.414,0*0.707+2*0.91);
\coordinate (l) at (2+2*0.707+2*0.414,2*0.707+2*0.91);

\coordinate (m) at (4*0.707-2*0.116,4*0.707+2*0.993);
\coordinate (n) at (6*0.707-2*0.116,6*0.707+2*0.993);
\coordinate (o) at (2+4*0.707+2*0.414,4*0.707+2*0.91);
\coordinate (p) at (2+6*0.707+2*0.414,6*0.707+2*0.91);

\draw[postaction={decorate}] (a) -- (b);
\draw[postaction={decorate}] (b) -- (c);
\draw[postaction={decorate}] (d) -- (b);
\draw[postaction={decorate}] (d) -- (c);
\draw[postaction={decorate}] (a) -- (d);

\draw[postaction={decorate}] (b) -- (e);
\draw[postaction={decorate}] (e) -- (f);
\draw[postaction={decorate}] (c) -- (f);
\draw[postaction={decorate}] (c) -- (e);

\draw[postaction={decorate}] (e) -- (g);
\draw[postaction={decorate}] (g) -- (h);
\draw[postaction={decorate}] (f) -- (h);
\draw[postaction={decorate}] (f) -- (g);

\draw[postaction={decorate}] (a) -- (i);
\draw[postaction={decorate}] (i) -- (j);
\draw[postaction={decorate}] (b) -- (j);
\draw[postaction={decorate}] (b) -- (i);

\draw[postaction={decorate}] (d) -- (k);
\draw[postaction={decorate}] (k) -- (l);
\draw[postaction={decorate}] (c) -- (l);
\draw[postaction={decorate}] (c) -- (k);

\draw[postaction={decorate}] (e) -- (m);
\draw[postaction={decorate}] (m) -- (n);
\draw[postaction={decorate}] (g) -- (n);
\draw[postaction={decorate}] (g) -- (m);

\draw[postaction={decorate}] (f) -- (o);
\draw[postaction={decorate}] (o) -- (p);
\draw[postaction={decorate}] (h) -- (p);
\draw[postaction={decorate}] (h) -- (o);

\filldraw[black] (a) circle (2pt);
\filldraw[black] (b) circle (2pt);
\filldraw[black] (c) circle (2pt);
\filldraw[black] (d) circle (2pt);
\filldraw[black] (e) circle (2pt);
\filldraw[black] (f) circle (2pt);
\filldraw[black] (g) circle (2pt);
\filldraw[black] (h) circle (2pt);
\filldraw[black] (i) circle (2pt);
\filldraw[black] (j) circle (2pt);
\filldraw[black] (k) circle (2pt);
\filldraw[black] (l) circle (2pt);
\filldraw[black] (m) circle (2pt);
\filldraw[black] (n) circle (2pt);
\filldraw[black] (o) circle (2pt);
\filldraw[black] (p) circle (2pt);
\end{scope}
\end{tikzpicture}
   \caption{The image of several subgraphs of $T^*$ under $\psi$. Explicit values are given for several vertices.}
   \label{fig: psi T-star}
   \end{figure}

As the image of every edge in $T^*$ is isometric to some edge of
$H(x_i)$ when $i\in I$, we get
\begin{obs}\label{obs: psi three edge lengths}
For every $x\in \mathbb{T}^d$, every edge of $T^*$ is mapped through $\psi_x$ to an interval of length in
\[\{1\}\cup \{|x_{i}-1|\ :\ i\in I\}.\]
\end{obs}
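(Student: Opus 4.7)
The proof is essentially bookkeeping once the construction of $\psi$ is unpacked, and my plan is to formalize the intuition given in the one-line comment preceding the statement. First I would note that the map $\pi$ extends naturally from vertices to edges of $H^*$, and that every edge of $T^*$ arises as the $\pi$-image of at least one edge $(N,(v_i,v_j))$ of some node $N\in H^*$. This is immediate from the construction of $T^*$ by iterated triangle gluing together with the faithfulness property of $\pi$ described in Section~\ref{sec:4}: every triangle of $T^*$ sits inside the image of some rhombus-node, and every edge belongs to some triangle.

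Next I would invoke the inductive definition of $\psi$. By construction, for any node $N$ of $H^*$ the four values $\psi_x(\pi(N,v_0)),\psi_x(\pi(N,v_1)),\psi_x(\pi(N,v_2)),\psi_x(\pi(N,v_3))$ form a translated and rotated copy of $\psi_H(x_{\ty(N)})(v_0),\ldots,\psi_H(x_{\ty(N)})(v_3)$. Since translations and rotations preserve Euclidean distance, the length of the image of each edge of $N$ coincides with the length of the corresponding edge in $\psi_H(x_{\ty(N)})(H)$.

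I would then compute these five lengths directly from the definition of $\psi_H$: the four sides $(v_0,v_1),(v_0,v_2),(v_2,v_3),(v_1,v_3)$ have lengths $|1-0|,|x_{\ty(N)}-0|,|(x_{\ty(N)}+1)-x_{\ty(N)}|,|(x_{\ty(N)}+1)-1|$, each equal to $1$ since $x_{\ty(N)}\in\T^1$, while the diagonal $(v_1,v_2)$ has length $|x_{\ty(N)}-1|$. Hence every edge of every node of $H^*$ is mapped by $\psi_x$ to a segment of length either $1$ or $|x_{\ty(N)}-1|$.

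Finally, combining the previous paragraphs, every edge of $T^*$ is mapped to a segment of length in $\{1\}\cup\{|x_{\ty(N)}-1|:N\in V(H^*)\}$; since $\ty(N)\in I$ for every $N$ by the definition \eqref{eq: ty}, this set is contained in $\{1\}\cup\{|x_i-1|:i\in I\}$, which proves the observation. There is no real obstacle here: the only thing to check carefully is that every edge of $T^*$ does lie in the $\pi$-image of some rhombus-node, which follows from the construction of $T^*$ as a $\Delta$-tree together with the covering interpretation discussed in Section~\ref{sec:4}.
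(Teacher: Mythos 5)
Your proposal is correct and follows exactly the reasoning the paper uses (the paper treats this observation as immediate from the construction, with the one-line remark that every edge of $T^*$ is isometric to some edge of $H(x_i)$): you unpack the same three facts, namely that every edge of $T^*$ lies in the $\pi$-image of some node, that each node is embedded as a rigid copy of $\psi_H(x_{\ty(N)})(H)$, and that the five edges of that rhombus have lengths $1$ or $|x_{\ty(N)}-1|$ with $\ty(N)\in I$. Nothing further is needed.
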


While this definition of $\psi((x_i)_{i\in I})$ is complete, an explicit formula for
every vertex in $T^*$ under $\psi((x_i)_{i\in I})$ is required for proving that
$\psi$ is indeed a polynomial embedding. We devote the next section to
develop this formula.

\subsection{The image of \texorpdfstring{$\psi$}{psi}}\label{subs: psi img}
In this section we state a formula for $\psi\circ \pi$ of every base vertex. An illustration of the image of several such vertices through $\psi\circ \pi$ is given in Figure~\ref{fig: psi T-star}.
Let $u\in T^*$ and let $N\in H^*$, such
that $\QR(N)=((q_k)_{k\in[m]},(\rho_k)_{k\in[m]})$ is the proper encoding of $u$ (as per Observation~\ref{obs: properties of rho-q}).
Let $x=(x_i)_{i\in I}$, and write $N_i$ for the node in $H^*$ which is
encoded by $((q_k)_{k\le i},(\rho_k)_{k\le i})$ (where $N_0=H^*_{1}$ corresponds to the null
sequence). Naturally, $N_m=N$.
From \eqref{eq: ty} we get
\begin{equation}\label{eq: nu-i}
\ty(N_i)=(q_{i-1}(\hspace{-8pt}\mod 2),\rho_{i-1}(\hspace{-8pt} \mod 2),s_{i-1}(\hspace{-8pt}\mod 3)).
\end{equation}

Observe that in the embedding of every $H^*$ node through $\psi$, the edges
$(v_0,v_2)$, $(v_1,v_3)$ are parallel, as are the edges $(v_0,v_1), (v_2,v_3)$.
Towards developing a formula for $\psi_x(u)$, denote $u_i^j:=\pi(N_i,v_j)$ and define
\begin{equation}\label{eq:P_i def}
P_i(x)=P^u_i(x):=\psi(u_i^1)-\psi(u_i^0)=\psi(u_{i-1}^2)-\psi(u_{i-1}^0).
\end{equation}
Notice that, by definition, $P_0(x)=1$.
Thus, $P^u_i$ is a unit vector in the direction of the edges
$(v_0,v_1),(v_2,v_3)$ in $\psi(\pi(N_i))$ which, for $i>0$, is the same as
the direction of $(v_0,v_2),(v_1,v_3)$ in $\psi(\pi(N_{i-1}))$.

With this in mind, it is possible to describe the change between $P_{i-1}$ and $P_{i}$, namely
\begin{equation}\label{eq: P formula}
P_{i}(x)=P_{i-1}(x)\cdot x_{\ty(N_{i-1})}.
\end{equation}

For $0\le i\le m$ write
\[Q_i(x)=Q^u_i(x):=\psi_{x}(u_i)=\psi_{x}(u_i^0),\] and observe that
$Q_0=\psi(\pi(H^*_\text{1},v_0))=0$. Next, we describe how to get $Q_i$ from
$Q_{i-1}$ using the QR encoding $((q_k),(\rho_k))$. By definition,
\[Q_i(x)-Q_{i-1}(x)=\psi(u_i^0)-\psi(u_{i-1}^0).\]
Thus, $Q_i(x)-Q_{i-1}(x)$ can be computed from the labels of the
edges along the path connecting $(N_{i-1},v_0)$ and $(N_i,v_0)$. Each edge
labeled $(v_2,v_3)$ contributes to this difference $P_{i}$, and thus in total
such edges contribute $q_i\cdot P_{i}$. An edge with label $(v_1,v_3)$
contributes $P_{i}/{x_{\ty(N_{i-1})}}=P_{i-1}$, while an edge labeled
$(v_0,v_2)$ does not change the base vertex at all.

Applying this to the encoding, we get that
\begin{equation*}
Q_i-Q_{i-1} = q_{i}\cdot P_{i}+\rho_{i}\cdot P_{i-1}.
\end{equation*}

Summing this over $1 \le i\le m$, we get:
\begin{align*}
\psi_{x}(u)=Q_{m}&=\sum_{i=1}^m \left(  q_i P_i + \rho_{i}P_{i-1} \right) \\
&= \rho_1+\sum_{i=1}^{m-1} \left( q_i+\rho_{i+1}\right) P_i+q_{m}P_{m}
\end{align*}

Equivalently, setting $\rho_{m+1}=0$ and $q_0=0$, we have
\begin{equation}\label{eq: psi formula}
\psi_{x}(u)= \sum_{i=0}^m \left( q_i+\rho_{i+1}\right) P_i(x)= \sum_{i=0}^m  c_i P_i(x),
\end{equation}
where
\begin{equation}\label{eq: c-i}
c_i=q_i+\rho_{i+1}.
\end{equation}
Observe that for every $u\in T^*$, $\psi_{x}(u)$ is a polynomial in
$\{x_i\}_{i\in I}$ (as $P_i$ are monomials). Also observe that the total
degree of $P_i$, which we denote by $\degr P_i$, obeys $\degr P_i=\degr
P_{i-1}+1$. Therefore $c_i$ is the coefficients of the only $i$-th degree monomial in the
polynomial $\psi_{x}(u)$.

Note that, in particular, using the above notation, Observation~\ref{obs:
properties of rho-q} and the fact that the encoding $((q_k),(\rho_k))$ is proper, yield that for $m>1$,
\begin{equation}\label{eq: c-m}
c_m=q_m>0.
\end{equation}

\subsection{Relating \texorpdfstring{$\psi$}{psi} to ascending polynomials}\label{subs: psi poly ascend}

In this section we show that, under the substitution $x_{i,j,k}=y_k$, the image of each vertex of $T^*$ under
$\psi$ forms a $(y_0,y_1,y_2)$-ascending polynomial.
To this end let $y_0,y_1,y_2\in \T^3$ and write $y=(y_0,y_1,y_2)$. 
Denote $x(y)\in \T^{12}$ for the vector satisfying
\[x_{i,j,k}(y)=y_k,\]
and write
\begin{align*}
\psi^*(y)&=\psi(x(y)).
\end{align*}
\begin{propos}\label{prop: 0-1-2 climbing}
For every vertex $u$ the polynomial
$\psi^*_u(y)$ is $(y_0,y_1,y_2)$-ascending.
\end{propos}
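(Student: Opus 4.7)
The plan is to make explicit the formula from Section~\ref{subs: psi img} under the substitution $x_{i,j,k}=y_k$, and then to match the resulting polynomial term by term against an instance of the ascending representation of Definition~\ref{defin: 3 claimbing polynom-b}.

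First, applying the substitution to \eqref{eq: psi formula} gives $\psi^*_u(y)=\sum_{i=0}^m c_i P_i^*(y)$, where $c_i=q_i+\rho_{i+1}$ and each $P_i^*$ is a monomial in $y_0,y_1,y_2$ whose factors trace the sequence $(s_j\bmod 3)$ defined in \eqref{eq: s}. Listing the indices with $c_{i_k}>0$ as $i_1<i_2<\cdots<i_K$ and using the recursion $s_{j+1}-s_j=\ind\{c_j>0\}$, the sequence $s_j$ is constant on each interval strictly between two consecutive $i_k$'s and increments by $1$ at each $i_k$. A direct computation then yields $P_{i_1}^*=y_0^{r_0}$ with $r_0:=i_1$, and, for $k\geq 1$,
\[
\frac{P_{i_{k+1}}^*}{P_{i_k}^*} \;=\; y_{(k-1)\bmod 3}\cdot y_{k\bmod 3}^{\,r_k-1},\qquad r_k:=i_{k+1}-i_k.
\]

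Second, under the cyclic identification $(y_0,y_1,y_2)\leftrightarrow(x_1,x_2,x_0)$, each nonzero monomial $P_{i_k}^*$ is placed into an ascending block indexed by $k$. One sets $a_1=r_0+1$; for $k\geq 1$, if $r_k\geq 2$ then $a_{k+1}=r_k$ and $P_{i_{k+1}}^*$ occupies the $b_{k+1,0}$ slot of block $k+1$, while if $r_k=1$ then $a_{k+1}=1$ with $b_{k+1,0}=0$ and $P_{i_{k+1}}^*$ occupies the $b_{k+1,1}$ slot. The nonzero coefficients $c_{i_k}$ are assigned accordingly, and any ascending block receiving no monomial contributes $b_{j,0}=b_{j,1}=0$.

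Third, one verifies conditions (1)-(4) of Definition~\ref{defin: 3 claimbing polynom-b}. Condition (1) is immediate from $r_k,c_{i_k}\in\N_0$. Condition (2), $b_{M,1}>0$, follows because the terminal coefficient $c_{i_K}>0$ is placed at the $b_{M,1}$ slot of the last block. Condition (3), the biconditional on $a_j=1$, is exactly the case distinction above: $a_j=1$ arises precisely when the preceding block closes at its $b_{j-1,1}$ slot ($b_{j-1,1}>0$) and the current block skips its $b_{j,0}$ slot. Condition (4), which forces $b_{j,0}>0$ whenever $a_j\geq 2$ for $j<M$, follows because $a_j\geq 2$ occurs only from $r_{j-1}\geq 2$, in which case $c_{i_j}>0$ supplies $b_{j,0}$.

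The main obstacle is the bookkeeping required to consistently align the sequence $(P_{i_k}^*)$ with the block slots across the regimes $r_k=1$ and $r_k\geq 2$, especially at the initial boundary $r_0=0$ (where $P_{i_1}^*$ is the constant $1$) and at the final block. The degenerate vertices $u\in\{0,01\}$ on the root edge yield a constant $\psi^*_u\in\{0,1\}$; these lie on the boundary of the ascending framework and must be handled by separate inspection.
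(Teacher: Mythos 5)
Your overall strategy is the same as the paper's: compute the monomials $P^*_\ell$ explicitly under the substitution, read off the run--length structure of the variables from the positions $i_1<\dots<i_K$ of the nonzero coefficients, and match this against the block decomposition of Definition~\ref{defin: 3 claimbing polynom-b}. Your first step, the formulas $P^*_{i_1}=y_0^{r_0}$ and $P^*_{i_{k+1}}/P^*_{i_k}=y_{(k-1)\bmod 3}\,y_{k\bmod 3}^{r_k-1}$, is the correct key computation and is exactly what drives the paper's argument (it is the content of Items~2 and~3 of Observation~\ref{obs: properties of ascending} for $\psi^*_u$).

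The slot assignment in your second step is, however, off by one in the case $r_k=1$, and this breaks the verification. Since $P^*_{i_k}$ sits at the monomial $\bigl(\prod_{i<k}x_i^{a_i}\bigr)x_k^{a_k-1}$, i.e.\ the $b_{k,0}$ slot of block $k$, the relation $P^*_{i_{k+1}}=P^*_{i_k}\cdot y_{k-1}=P^*_{i_k}\cdot x_k$ for $r_k=1$ gives $P^*_{i_{k+1}}=\prod_{i\le k}x_i^{a_i}$, which is the $b_{k,1}$ slot of block $k$ (coinciding, when $a_{k+1}=1$, with the $b_{k+1,0}$ slot), \emph{not} the $b_{k+1,1}$ slot: that slot is the monomial $\prod_{i\le k}x_i^{a_i}\cdot x_{k+1}$ of total degree $i_{k+1}+1$. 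With your assignment the displayed sum does not reproduce $\psi^*_u$, and your verification of condition~(3) is circular, because the substantive direction of (3) is precisely that $b_{k,1}>0$ whenever $a_{k+1}=1$, which requires identifying $b_{k,1}$ with the coefficient $c_{i_{k+1}}$ --- something your rule never does. (The rule is also not self-consistent: once a monomial is booked in a $b_{k+1,1}$ slot, the next ratio $y_k y_{k+1}^{r_{k+1}-1}$ no longer lands in any slot of block $k+2$.) The correct bookkeeping is that $c_{i_k}$ always occupies the degree-$(A_k-1)$ monomial with $A_k=i_k+1$, and is recorded under $b_{k-1,1}$ exactly when $a_k=r_{k-1}=1$; this is what yields both directions of condition~(3). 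Finally, your justification of condition~(2), that the terminal coefficient is placed at the $b_{M,1}$ slot, does not follow from your own rule when $r_{K-1}\ge 2$ (your rule then puts $c_{i_K}$ in a $b_{K,0}$ slot); the final block must be set up separately so that the leading monomial $P^*_m$ is the top slot of the last block.
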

\begin{proof}
Our purpose is to verify that $\psi^*_u(y)$ satisfies the conditions of Definition~\ref{defin: 3 claimbing polynom-b}.
Throughout the proof we omit the modulo $3$ in the indices of $y$. 
Denote $P^*_i(y)$ for the monomial of total degree $i$ in $\psi^*_u(y)$ as per $\eqref{eq:P_i def}$.
Recall~\eqref{eq: ty}, \eqref{eq: s} and \eqref{eq: P formula}, and use that fact that $s_m=\sum_{i=1}^m \ind(q_{i-1}+\rho_{i}>0)$ and $s_m\in \{s_{m-1},s_{m-1}+1\}$
to deduce that there existsa sequence $(a_i)_{i\le j}$ of natural numbers such that for all $\ell\in\N_0$,  writing $j=\min\{i\ :\ \sum_{i'=1}^i a_{i'}\ge \ell\}$
\[P^*_\ell(y)=\left(\prod_{i=1}^{j-1}y_{i}^{a_i}\right)y_{j}^{\ell-\sum_{i'=1}^{j-1}a_{i'}}.\]
By \eqref{eq: psi formula}, we deduce that $\psi^*_u(y_0,y_1,y_2)$ can be written
\[\psi^*_u(y_0,y_1,y_2)=\sum_{j=1}^{m} \left(\prod_{i=1}^{j-1} y_{i}^{a_i}\right)\left(b_{j,0}y^{a_j-1}_{j}+b_{j,1}y^{a_j}_{j}\right),\]
where, by \eqref{eq: c-m}, $b_{m,a_m}>0$, and $b_{j,0}=0$ if $a_j=1$. Thus Items~\ref{defin: 3 claimbing polynom-b-item0} and~\ref{defin: 3 claimbing polynom-b-item1} of the definition are satisfied.
Next, observe that, for $k\in\{0,1\}$,
\begin{equation}\label{bjk condition}
\text{$b_{j,k}=0$ if and only if $q_\ell+\rho_{\ell+1}=0$ for $\ell=\sum_{i=1}^{j}a_i-1+k$.}
\end{equation}

Further observe that, by \eqref{eq: s} and~\eqref{eq: P formula}, we have $\frac{P^*_{\ell+1}}{P^*_{\ell}}=\frac{P^*_{\ell}}{P^*_{\ell-1}}$ if and only if $s_\ell=s_{\ell-1}$,
so that
\begin{equation}\label{sl condition}
\text{$s_\ell=s_{\ell-1}+1 \pmod 3$ if and only if there exists $j\le m$ such that $\ell=\sum_{i=1}^{j}a_i$.}
\end{equation}

Next, let $j\le m$ and write $\ell=\sum_{i=1}^{j-1} a_i$.
To obtain Item~\ref{defin: 3 claimbing polynom-b-item3} in definition~\ref{defin: 3 claimbing polynom-b}, observe that, by putting together \eqref{bjk condition},
\eqref{sl condition} and the definition of $s_{\ell}$, we have
$a_j=1$ if and only if 
$b_{j-1,1}=q_{\ell}+\rho_{\ell+1}>0$ (which in turn implies $b_{j,0}=0$).

To obtain Item~\ref{defin: 3 claimbing polynom-b-item4}, assume that $a_j>1$ and denote $\ell=\sum_{i=1}^{j}a_i$. Observe that,
by \eqref{sl condition}, $s_\ell=s_{\ell-1}+1 \pmod 3$.
By the definition of $s_\ell$ this implies that $q_{\ell-1}+\rho_{\ell}> 0$ so that,
by \eqref{bjk condition}, $b_{j,0}>0$.
\end{proof}

\subsection{\texorpdfstring{$\psi$}{psi} is a
degenerate polynomial embedding}\label{subs: psi poly embd}

In this section we show that the image of the vertices of $T^*$ under $\psi$
are all distinct.
Relation \eqref{eq: psi formula} and Observation~\ref{obs:
psi three edge lengths} imply that if this is the case, then $\psi$ is a
$12$-degenerate polynomial embedding of $T^*$.

The proof is a significantly simplified version of the proof used in \cite{AF} (n.b.
that the result there is not directly applicable to our modified construction).
The main proposition of this section is the following.
\begin{propos}\label{prop: psi is polynomial embedding1}
Let $u,w\in T^*$ be two distinct vertices. Then $\psi_{x}(u)$ and
$\psi_{x}(w)$ are distinct polynomials in $x=(x_i)_{i\in I}$.
\end{propos}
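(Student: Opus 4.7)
The plan is to show that $\psi_x$ is injective on $V(T^*)$ by demonstrating that the formal polynomial $\psi_x(u)$ uniquely recovers the proper QR-encoding $QR(u)$; the uniqueness of proper encodings (Observation~\ref{obs: properties of rho-q}) then yields $u=w$.

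The first observation is that the monomials $P_i$ defined in \eqref{eq:P_i def} have pairwise distinct total degrees ($\deg P_i = i$ by \eqref{eq: P formula}), so the expansion $\psi_x(u) = \sum_{i=0}^{m} c_i P_i$ from \eqref{eq: psi formula} is precisely the decomposition into homogeneous components. An equality $\psi_x(u) = \psi_x(w)$ therefore forces $c_i^u = c_i^w$ for every $i$, together with $P_i^u = P_i^w$ whenever this common coefficient is positive. Since $c_m = q_m > 0$ by \eqref{eq: c-m}, the polynomial degree equals $m(u)$, so $m(u) = m(w) =: m$.

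I would then decode $QR$ by induction on $k$, establishing $(q_j^u,\rho_j^u) = (q_j^w,\rho_j^w)$ for all $j \le k$. Using \eqref{eq: nu-i}, the ratio $P_{i+1}/P_i = x_{\ty(N_i)}$ (whenever both are visible) reveals the parity of $q$ and the value of $\rho$ at a certain index from the first two coordinates of the corresponding type variable. Paired with the relations $c_j = q_j + \rho_{j+1}$, with the anchors $c_0 = \rho_1$ and $c_m = q_m$, the decomposition $c_j = q_j + \rho_{j+1}$ becomes uniquely solvable over $q_j \in \mathbb{N}_0$, $\rho_{j+1} \in \{0,1\}$: knowing $c_j$ together with one parity constraint pins both values, so the induction advances.

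The main obstacle is the careful handling of stretches of indices where $c_j = 0$: in such a stretch the monomial $P_j$ is not directly observable from $\psi_x(u)$, although the QR entries are forced to $q_j = 0,\ \rho_{j+1} = 0$. To complete the induction one must still extract the type of the first index beyond the gap, which is done by selecting the next index $\ell$ with $c_\ell > 0$ (which exists since $c_m > 0$), writing the visible $P_\ell^u = P_\ell^w$ as a product of type variables anchored at an inductively computable monomial, and identifying the remaining unknown types via a case analysis on the one or two positions not pinned by the zero-coefficient forcing. A direct inspection shows that the alternative assignments produce formal products in the $12$ variables with distinct multisets of exponents, so the observed quotient singles out the correct assignment and supplies the missing parity information, closing the induction.
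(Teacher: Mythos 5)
Your proposal is correct and is essentially the paper's own argument in a different guise: the paper also matches coefficients degree by degree (forcing $c^u_i=c^w_i$ and $m_u=m_w$), locates the first index $k$ where the encodings could disagree, uses that a run of zero coefficients forces $q_i=\rho_{i+1}=0$, and then compares the explicit products of type variables in the next visible monomials $P^u_\ell$ and $P^w_\ell$ to rule out the alternative assignments. The only difference is presentational (you decode $QR(u)$ by induction; the paper derives a contradiction at the first disagreement), and your "direct inspection of multisets of exponents" is exactly the paper's "one may verify" step.
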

\begin{proof}
Assume that $\psi_{x}(u)\equiv \psi_{x}(w)$ and
let the proper QR-encoding of the two vertices $u,w$ be
\begin{align*}
QR(u)&=((q^u_{i})_{i\in[m_u]}\  ,(\rho^u_{i})_{i\in[m_u]}),\\
QR(w)&=((q^w_{i})_{i\in[m_w]}\  ,(\rho^w_{i})_{i\in[m_w]}).
\end{align*}
Further assume that $m_u,m_w>1$, as otherwise $\psi_x$ is a constant function and the
proposition is straightforward.
As per \eqref{eq: psi formula}, we write
\begin{equation}\label{eq: psi u psi v}
\begin{split}
\psi_{x}(u)&=\sum_{i=0}^{m_u}c^u_iP_i^u,\\
\psi_{x}(w)&=\sum_{i=0}^{m_w}c^w_iP_i^w.
\end{split}
\end{equation}
Since $\psi_{x}(u)\equiv\psi_{x}(w)$ and $c^u_{m_u},c^w_{m_w}>0$ (by Observation~\ref{obs:
properties of rho-q}), we obtain that $m_u=m_w=m$. By substituting $x_i=y$ for all $i\in I$, and equating coefficients in~\eqref{eq: psi u psi v} we get
\begin{equation}\label{eq:coef-comp}
\forall i\in [m]:\ c^u_i=c^w_i,
\end{equation}
where $c^u_i= q^u_{i}+\rho^u_{i+1}$ and $c^w_i= q^w_{i}+\rho^w_{i+1}$.
Denote $k=\inf\{i\le m\ :\ q^u_i\neq q^w_i \vee \rho^u_{i+1}\ne\rho^w_{i+1}\}$ and $\ell=\inf\{i\ge k:c^u_i\ne 0\vee  c^w_i\ne 0\}$.
Assume for the sake of obtaining a contradiction that $k<\infty$ and observe that since $c^u_{m},c^w_{m}>0$, we have $\ell\le m<\infty$.
Observe that, by~\ref{eq: P formula} and the definition of $k$, we have $P^u_k\equiv P^w_k$ and denote this monomial by $P$.
Moreover, since
\[q^u_k+\rho^u_{k+1}=c^u_{k}=c^w_{k}=q^w_k+\rho^w_{k+1}\]
and either $q^u_k\neq q^u_k$, or $\rho^u_{k+1}\ne\rho^w_{k+1}$ we obtain
that both must hold so that $k<\ell$. By the definition of $\ell$,
for every $k<i<\ell$ we have
\[q^u_i=\rho^u_{i+1}=q^w_i=\rho^w_{i+1}=0.\]
Hence, using \eqref{eq: s}, \eqref{eq: nu-i} and \eqref{eq: P formula},
\[P^u_{\ell}=Px_{q^u_{k},\rho^u_{k},s^u_{k}}x_{0,\rho^u_{k+1},s^u_{k+1}}x_{0,0,s^u_{k+1}}^{\ell-k-2}, P^w_{\ell}=Px_{q^w_{k},\rho^w_{k},s^w_{k}}x_{0,\rho^w_{k+1},s^w_{k+1}}x_{0,0,s^w_{k+1}}^{\ell-k-2}.\]
One may verify that since $q^u_k\neq q^u_k$ and $\rho^u_{k+1}\ne\rho^w_{k+1}$, we must have
$P_{\ell}^u\ne P_{\ell}^w$, while the coefficient of at least one of the them is not $0$, in contradiction to our assumption
that $\psi_{x}(u)\equiv\psi_{x}(w)$. Hence $k=\infty$; the proposition follows.
\end{proof}
\subsection{\texorpdfstring{$\psi$}{psi} is a
polynomial embedding}\label{subs: psi poly embd2}

In this section we study the possible configurations of a vertex under $\psi$ with respect to
the line connecting the images of two adjacent vertices in $T^*$. This is studied separately for edges which form
the side of a rhombus in $H^*$ (Proposition~\ref{prop: psi is polynomial embedding2}) and those that form a diagonal (Proposition~\ref{prop: psi is polynomial embedding3}). This, together with
Relation~\eqref{eq: psi formula} and Observation~\ref{obs:
psi three edge lengths} guarantee that $\psi$ is a
$12$-polynomial embedding of $T^*$.

\begin{propos}\label{prop: psi is polynomial embedding2}
Let $u,w\in T^*$ be two distinct vertices, and denote by $P(x)$ the leading monomial of $\psi_{x}(w)$.
If for all $x\in \T^{12}$ we have
\[\frac{\psi_{x}(u)-\psi_{x}(w)}{P(x)}\in \R,\] then there exists $k\in\Z$ such that
\[\psi_{x}(u) \equiv \psi_{x}(w) + kP(x).\]
\end{propos}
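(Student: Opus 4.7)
The plan is to translate the realness condition into central palindromicity in twelve variables (via Proposition \ref{propos: palindromicity}), collapse to three variables by substituting $x_{i,j,k} = y_k$, apply the three-variable ascending-polynomial rigidity result (Proposition \ref{prop:x123-polynom-b}), and then lift the resulting identity back to twelve variables. After the substitution, Observation \ref{obs:palindromei is hereditary} preserves central palindromicity of $\frac{\psi^*_u(y) - \psi^*_w(y)}{M(y)}$, where $M(y) := P(x(y))$, and Proposition \ref{prop: 0-1-2 climbing} guarantees that both $\psi^*_u(y)$ and $\psi^*_w(y)$ are $(y_0, y_1, y_2)$-ascending. Equation \eqref{eq: c-m} identifies $M$ as the leading monomial of $\psi^*_w$ with positive integer coefficient $c^w_{m_w}$.

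Next I split into two cases according to whether $\deg \psi^*_u \ge \deg \psi^*_w$. In the former case I invoke Proposition \ref{prop:x123-polynom-b} directly with the roles $(P, Q) = (\psi^*_u, \psi^*_w)$ to obtain $\psi^*_u - \psi^*_w = k M(y)$ for some $k \in \Z$. In the latter case $\deg \psi^*_u < \deg \psi^*_w = \deg M$, the same conclusion follows from a direct palindromicity-by-degree argument: every monomial in the numerator $\psi^*_u - \psi^*_w$ has total degree at most $\deg M$, so its palindromic partner relative to $M$ has degree at least $\deg M$; hence all numerator monomials of strictly smaller degree must vanish, leaving only the self-symmetric $-c^w_{m_w} M$ at degree $\deg M$, i.e., $\psi^*_u - \psi^*_w = -c^w_{m_w} M(y)$.

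To lift the equality to the full twelve-variable setting, I use that, by \eqref{eq: psi formula}, each of $\psi_x(u)$ and $\psi_x(w)$ contains exactly one monomial per total degree. Matching the $y$-identity $\psi^*_u - \psi^*_w = k M(y)$ degree-by-degree yields, for $\ell \neq m_w$, the equality $c^u_\ell P^u_\ell(y) = c^w_\ell P^w_\ell(y)$. The main obstacle is that the substitution $x_{i,j,k} = y_k$ is many-to-one on monomials, so this identity does not immediately imply $c^u_\ell P^u_\ell(x) = c^w_\ell P^w_\ell(x)$. To bridge the gap I re-invoke the twelve-variable central palindromicity relative to $P$ obtained in the first step: any hypothetical discrepancy $P^u_\ell(x) \neq P^w_\ell(x)$ produces two distinct $x$-monomials of opposite signs in $\psi_x(u) - \psi_x(w)$, whose palindromic partners at degree $2m_w - \ell$ must also appear in $\psi_x(u) - \psi_x(w)$ with matching coefficients. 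Since at most two monomials are available at each degree (one from $\psi_x(u)$ carrying a positive coefficient, one from $-\psi_x(w)$ carrying a negative coefficient), the sign constraints together with the degree-range restrictions force $P^u_\ell(x) = P^w_\ell(x)$ for all relevant $\ell$, promoting the $y$-identity to $\psi_x(u) - \psi_x(w) = k P(x)$ as desired.
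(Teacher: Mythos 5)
Your proposal is correct and follows essentially the same route as the paper: realness $\Rightarrow$ central palindromicity (Proposition~\ref{propos: palindromicity}), the substitution $x_{i,j,k}=y_k$ together with Observation~\ref{obs:palindromei is hereditary} and Proposition~\ref{prop: 0-1-2 climbing}, the rigidity of ascending polynomials (Proposition~\ref{prop:x123-polynom-b}), and a lift back to twelve variables using the one-monomial-per-total-degree structure from \eqref{eq: psi formula} plus the sign/degree constraints on palindromic partners; your explicit case split on $\deg\psi^*_u$ versus $\deg\psi^*_w$ is in fact more careful than the paper about the hypothesis $\deg Q\le\deg P$ in Proposition~\ref{prop:x123-polynom-b}. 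The one step worth spelling out is $\ell=m_w$ itself, where the opposite-sign argument does not apply verbatim: there the degree-$d$ monomial $P'$ of $\psi_x(u)$ has partner $P^2/P'$ at the same degree $d$, which equals neither $P$ nor $P'$ unless $P'=P$, so palindromicity forces $P'=P$ or $c^u_{m_w}=0$ --- exactly the paper's closing step.
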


\begin{proof}
Denote $f:=\frac{\psi_{x}(u)-\psi_{x}(w)}{P(x)}$, and assume that $f$ is a real function on $\mathbb T^{12}$, so that, by Proposition~\ref{propos: palindromicity}, $f$ is central palindromic.
Further assume that $\psi_{x}(u)\neq \psi_{x}(w)$, as otherwise the proposition is straightforward.
Let $y_0,y_1,y_2\in \T^3$ and write $y=(y_0,y_1,y_2)$.
Denote $x(y)\in \T^{12}$ for the vector
satisfying
\[x_{i,j,k}(y)=y_k,\]
as in Section~\ref{subs: psi poly ascend}. Denote
\begin{align*}
\psi^*_{u}(y)&=\psi_{x(y)}(u),\\
\psi^*_{w}(y)&=\psi_{x(y)}(w).
\end{align*}
Observe that the leading monomial of $\psi^*_{w}(y)$ is $P(x(y))$ and denote the total degree of $P$ by $d$.
By Proposition~\ref{prop: 0-1-2 climbing}, the polynomials $\psi^*_{u}$ and $\psi^*_{w}$ are $(y_0,y_1,y_2)$-ascending.
Denote $g(y):=f(x(y))$ and observe that, by Observation~\ref{obs:palindromei is hereditary},
$g$ is central palindromic. Hence, by
Proposition~\ref{prop:x123-polynom-b}, $g(y)=k$, for some $k\in \Z$. Hence all monomials of total degree other than $d$
in $\psi^*_{u}(y)$ and $\psi^*_{w}(y)$ coincide.

Next we wish to lift the above argument to show that all monomials of total degree other than $d$
in $\psi_u:=\psi_{x}(u)$ and $\psi_w:=\psi_{x}(w)$ also coincide.
Observe that, by
\eqref{eq: psi formula}, $\psi_{u}$ and $\psi_{w}$ have at most one monomial of each total degree.
 Let $d'< d$ and denote by $P_u^{d'}$ and $P_w^{d'}$ the unique monomial of total degree $d'$ in
$\psi_{u}$ and $\psi_{w}$ respectively (which may have coefficient $0$).
Observe that, for every $d^*>d$ there is at most one monomial of total degree $d^*$ in the polynomial $\psi_{x}(u) - \psi_{x}(w)$, and thus, by Proposition~\ref{propos: palindromicity}, this polynomial contains at most one monomial of each total degree $d'<d$.
From the fact that $\psi^*_{u}(y)$ and $\psi^*_{w}(y)$ coincide we deduce that $P_u^{d'}$ and $P_w^{d'}$ are equal.

Denote $P'$ for the monomial of total degree $d$ in $\psi_u$. It follows that
\[\psi_{x}(u) - \psi_{x}(w) = k_1 P + k_2 P',\]
for $k_1,k_2\in \Z$, where $k_1<0$ for $P'\neq P$. Since  $\frac{\psi_{x}(u) - \psi_{x}(w)}{P}$ is central palindromic we deduce that either $k_2=0$, or $P=P'$. The proposition follows.
\end{proof}

\begin{propos}\label{prop: psi is polynomial embedding3}
Let $u,w\in T^*$ be two distinct vertices, where $w$ is the vertex $v_2$ of a rhombus $H$ in $H^*$, denote $y=x_{\ty(H)}$ and write $P$ for the leading monomial of $\psi_x(w)$.
If for all $x\in\T^{12}$ we have
\[\frac{\psi_x(u)-\psi_x(w)}{P(1-y^{-1})}\in\R\]
then there exists $d\in \Z $ such that $\psi_{x}(u)=\psi_{x}(w)-dP(1-y^{-1})$.
\end{propos}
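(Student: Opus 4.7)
My plan is to mirror the proof of Proposition~\ref{prop: psi is polynomial embedding2}, swapping Proposition~\ref{propos: palindromicity} for Proposition~\ref{prop:simetric factors B} and establishing a diagonal analog of Proposition~\ref{prop:x123-polynom-b}. A preliminary observation is that since $w=\pi(H,v_2)$, the proper QR-encoding of $w$ ends with a step whose type is $\ty(H)$, so the leading monomial $P$ of $\psi_x(w)$ has $y=x_{\ty(H)}$ as its last factor; consequently $P/y$ is a genuine monomial and $P(1-y^{-1})=P-P/y$ is an honest polynomial. Assuming $\psi_x(u)\not\equiv\psi_x(w)$ (otherwise the conclusion holds with $d=0$), I would apply Proposition~\ref{prop:simetric factors B} after permuting variables so that $y$ plays the role of $x_1$, obtaining that the coefficients of any two monomials $L,L'$ in $\psi_x(u)-\psi_x(w)$ satisfying $LL'=P^2/y$ are additive inverses.

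Next, I would substitute $x_{i,j,k}=y_k$ as in Section~\ref{subs: psi poly ascend}, producing $(y_0,y_1,y_2)$-ascending polynomials $\psi^*_u(y),\psi^*_w(y)$ with non-negative integer coefficients by Proposition~\ref{prop: 0-1-2 climbing}. Writing $k^*$ for the third coordinate of $\ty(H)$, the variable $y$ reduces to $y_{k^*}$ and $P$ reduces to a monomial $P^*$ which remains the leading monomial of $\psi^*_w$. By Observation~\ref{obs:palindromei is hereditary} and Proposition~\ref{prop:simetric factors B} applied on $\T^3$, the additive-inverse property persists: in $\psi^*_u-\psi^*_w$, coefficients of pairs $(L,L')$ with $LL'=(P^*)^2/y_{k^*}$ sum to zero.

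The heart of the argument is then a diagonal analog of Proposition~\ref{prop:x123-polynom-b}: if $A,B\in\R[y_0,y_1,y_2]$ are $(y_0,y_1,y_2)$-ascending with non-negative coefficients, $M_0$ is the leading monomial of $B$ with $y_{k^*}\mid M_0$, and the coefficients of pairs $(L,L')$ in $A-B$ with $LL'=M_0^2/y_{k^*}$ are additive inverses, then $A-B=k(M_0-M_0/y_{k^*})$ for some $k\in\Z$. The proof follows the template of Proposition~\ref{prop:x123-polynom-b}: let $\cP$ be the set of monomials of degree strictly larger than $\deg M_0$ with positive coefficient in $A$. Each $N\in\cP$ forces $\bar N:=M_0^2/(N y_{k^*})$ (of degree strictly less than $\deg M_0-1$) to have positive coefficient in $B$, since the additive-inverse condition gives $\bar N$ negative coefficient in $A-B$ while $A$ has non-negative coefficients. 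Assuming $\cP$ non-empty and choosing $N$ minimal with respect to the order of Observation~\ref{obs: properties of ascending}, one rules out every possible position of the symmetric pair relative to the distinguished pair $\{M_0/y_{k^*},M_0\}$: Items~\ref{obs: properties of ascending:it2} and~\ref{obs: properties of ascending:it3} of that observation impose that ratios of consecutive monomials are bivariate of a prescribed cyclic form, but the ratio $N/\bar N=N^2 y_{k^*}/M_0^2$ cannot match such a form after accounting for the one or two ``missed'' monomials interposed between $\bar N$ and $N$. Thus $\cP$ is empty, and the additive-inverse condition then leaves only the pair $\{M_0,M_0/y_{k^*}\}$ as possible monomials of degree $\ge\deg M_0-1$ in $A-B$, giving $A-B=k(M_0-M_0/y_{k^*})$.

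Finally, I would lift from three variables to twelve as in the closing paragraph of Proposition~\ref{prop: psi is polynomial embedding2}: relation~\eqref{eq: psi formula} gives at most one monomial per total degree in each of $\psi_x(u),\psi_x(w)$, and the additive-inverse condition (combined with the observation that each total degree greater than $\deg P$ contains at most one monomial in $\psi_x(u)-\psi_x(w)$) forces the degree-$d'$ monomials of $\psi_x(u)$ and $\psi_x(w)$ to coincide for all $d'\notin\{\deg P-1,\deg P\}$, while at degrees $\deg P$ and $\deg P-1$ they differ by $kP$ and $-kP/y$ respectively. The main obstacle is the combinatorial lemma of the third step: the center of symmetry $P^*/\sqrt{y_{k^*}}$ is not itself a monomial but sits between the two distinguished monomials $P^*/y_{k^*}$ and $P^*$, so the case analysis must now accommodate a gap of two elements in the middle rather than one, and special care is needed to rule out configurations in which $\bar N$ and $N$ straddle the pair $\{P^*/y_{k^*},P^*\}$ with exactly those two monomials interposed between them in the ascending order.
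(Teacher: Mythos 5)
Your overall strategy is genuinely different from the paper's: the paper proves this proposition directly in the twelve variables, setting $k=\inf\{i:(q^u_i,\rho^u_i)\neq(q^w_i,\rho^w_i)\}$ and $\ell=\min\{i>k:c^u_i\neq 0\}$ and deriving a contradiction from $\ell<m_w$ by an explicit case analysis on the monomials $P^u_\ell,P^w_\ell$ that exploits the first two coordinates of the type function (the $q\bmod 2$ and $\rho\bmod 2$ indices); it never passes to the three-variable substitution or to ascending polynomials here. Your route --- project to $(y_0,y_1,y_2)$, prove a ``diagonal'' analog of Proposition~\ref{prop:x123-polynom-b}, then lift --- could conceivably be made to work, but as written it has a genuine gap at exactly the step you single out as the key lemma.

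The gap: in Proposition~\ref{prop:x123-polynom-b} the argument works because a monomial $N$ of degree $\ge d$ with positive coefficient forces its symmetric partner to have \emph{positive} coefficient in $P-Q$, hence to be a monomial of $P$ itself; both $N$ and its partner then sit in the \emph{same} ascending chain, and Observation~\ref{obs: properties of ascending} constrains the ratio of (near-)consecutive monomials of that one chain. In your anti-palindromic setting the partner $\bar N=M_0^2/(y_{k^*}N)$ must have \emph{negative} coefficient in $A-B$, hence is a monomial of $B$, while $N$ is a monomial of $A$. Items~\ref{obs: properties of ascending:it2} and~\ref{obs: properties of ascending:it3} of Observation~\ref{obs: properties of ascending} say nothing about ratios of monomials drawn from two different ascending polynomials, so the ``consecutive monomials'' case analysis you invoke does not apply to the pair $(\bar N,N)$; relating the chain of $A$ above degree $d$ to the chain of $B$ below degree $d-1$ is essentially the content of the proposition and cannot be presupposed. (You correctly flag the two-element gap at the center of symmetry, but that is the lesser of the two problems.) A second, smaller issue is the lifting step: at degrees $d$ and $d-1$ the difference can a priori contain up to four distinct twelve-variable monomials, and ruling out the configuration in which $P^u_d\neq P$ but its partner $P^2/(yP^u_d)$ coincides with the degree-$(d-1)$ monomial of $\psi_x(u)$ or of $\psi_x(w)$ again requires the injectivity properties of the full type $(q\bmod 2,\rho\bmod 2,s\bmod 3)$ --- precisely the information destroyed by the substitution $x_{i,j,k}\mapsto y_k$, and precisely what the paper's direct argument uses to reach its contradiction.
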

The proposition is acompanid by Figure~\ref{fig: 5}.

\begin{proof}
Denote
\begin{align*}
QR(w)&=((q^w_{i})_{i\in[m_w]}\  ,(\rho^w_{i})_{i\in[m_w]}),\\
QR(u)&=((q^u_{i})_{i\in[m_u]}\  ,(\rho^u_{i})_{i\in[m_u]}),\\
\psi_x(w)&=\sum_{i=0}^{m_w}c^w_iP_i^w,\\
\psi_x(u)&=\sum_{i=0}^{m_u}c^u_iP_i^u,\\
f&=\frac{\psi_x(u)-\psi_x(w)}{P(1-y^{-1})}.
\end{align*}
Under the assumptions of the proposition, the function $f$ is real and thus,
\[\frac{\psi_x(u)-\psi_x(w)}{P(1-y^{-1})}\equiv \frac{\psi_{\overline x}(u)- \psi_{\overline x}(w)}{\overline P(1-y)}.\]
Observe that the coefficients of the monomials of $\psi_x(u)$ and $\psi_x(w)$ are all positive. The monomials with total degree greater than $d=\deg P$ in the polynomial $\psi_x(u)-\psi_x(w)$ have positive coefficients. Hence, by Proposition~\ref{prop:simetric factors B}, their symetric monomials around $\frac{P}{\sqrt y}$ have negative coefficients. Moreover, there is at most one monomial of each total degree in the polynomial $\psi_x(u)-\psi_x(w)$.
Denote
\begin{align*}
k&=\inf\{i\in\{0,\dots,m_w-1\}\ :\ (q^u_i,\rho^u_{i})\neq (q^w_i, \rho^w_{i})\},\\
\ell&=\min\{i> k\ : \ c^u_i\neq 0\}.
\end{align*}
Observe that $k<\infty $, unless $\psi_x(u)=\psi_x(w)$. Assume towards obtaining a contradiction that $\ell< m_w$.
By the definition of $k$ and~\eqref{eq: P formula}, the monomials $P_k^w$ and $P_k^u$ are equal and we write $P_k=P_k^w=P_k^u$. Observe that since, $c^u_{k+1}=0,\dots, c^u_{\ell-1}=0$, we have
\begin{equation}\label{eq:zeroes}
q^u_{k+1}+\rho^u_{k+2}= \dots = q^u_{\ell-1}+\rho^u_{\ell}=0.
\end{equation}
Thus, by~\eqref{eq: s}, the monomials
$P_\ell^w$ and $P_\ell^u$ satisfy:
\begin{align*}
P^u_\ell&=P_k\prod_{i=k}^{\ell-1}x_{q^u_i,\rho^u_i,s_i^u}= P_k\ \cdot x_{q^u_k,\rho^u_k,s^u_k}\ \cdot x_{0,\rho^u_{k+1},s^u_{k+1}}\ \cdot(x_{0,0,s^u_{k+1}})^{\ell-k-2},\\
P^w_\ell&=P_k\prod_{i=k}^{\ell-1}x_{q^w_i,\rho^w_i,s_i^w},
\end{align*}
From the assumption that $\ell< m_w$ and Proposition~\ref{prop:simetric factors B},
it follows that
$c^u_{\ell}-c^w_{\ell}\le 0$ and hence, $P_\ell^u=P_\ell^w$. We obtain that the total degree of the varliables $x_{*,*,s^u_k}$ in the monomials $P_\ell^u$ and $P_\ell^u$ are equal.
Moreover, the monomial $\frac{P^u_\ell}{P_k}$ does not contain terms of the form $x_{*,*,s^u_k-1}$ so that $\frac{P^w_\ell}{P_k}$ does not contain such terms as well. The contradiction will now follow from a case study of $P_\ell^u$.

Recall that $s_{k+1}=s_k +\ind(q_k+\rho_{k+1}>0)$, so that if $\rho^u_{k+1}=1$ or $q_k^u>0$, then $s_{k+1}^u=s_{k}^u+1$. Hence, in this case $x_{q^w_k,\rho^w_k,s^w_k}$ must 
be the only variable of the form $x_{*,*,s^u_k}$ in $\{x_{q^w_i,\rho^w_i,s^w_i}\}_{i\in k,\dots,\ell-1}$, so that it must be equal to $x_{q^u_k,\rho^u_k,s^u_k}$, in contradiction with the definition of $k$. On the other hand, if $\rho^u_{k+1}=0$ and $q_k^u=0$ it follows that $\rho_{i+1}^w=q_i^w=0$ for every $i\in\{k+1,\dots,\ell-1\}$. Hence,

\[P^u_\ell=P_k\prod_{i=k}^{\ell-1}x_{q^u_i,\rho^u_i,s_i^u}= P_k x_{0 ,\rho^u_k, s^u_k}(x_{0,0,s^u_{k}})^{\ell-k-1},\]
so that by~\eqref{eq: s} and \eqref{eq:zeroes}, we have $x_{q^w_k,\rho^w_k,s^w_k}=x_{q^u_k,\rho^u_k,s^u_k}$, in contradiction with our definition of $k$.
We conclude that $\ell\ge m_w$. By the fact that $c^w_{m_w}\ge 0$ and $c^u_{\ell}-c^w_{\ell}\le 0$ we obtain that $\ell=m_w$. By the antisymmetry of coefficients we deduce that $c^w_{m_w-1}> 0$. By the definition of $k$ and $\ell$ we obtain that either $\psi_x(u)=\psi_x(w)$ or $k=m_w-1$.
Therefore,

\[\psi_u-\psi_w=dP-ey^{-1}P\]
where $d,e\in\Z$. From Proposition~\ref{prop:simetric factors B}, we have $d=e$. The proposition follows.
\end{proof}

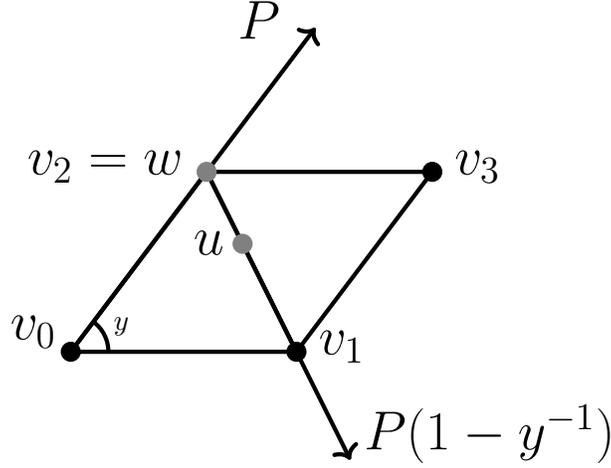
\begin{figure}
   \centering%
\begin{tikzpicture}
\draw[->,ultra thick,] (0,0) -- (3*1.8*0.602,3*1.8*0.798);
\draw[->,ultra thick,] (3*0.602,3*0.798) -- (3+0.6*3-0.6*3*0.602,-0.6*3*0.798) ;
\begin{scope}[ultra thick]
\coordinate[label={[xshift=-5mm,yshift=-1mm]\huge $v_0$}] (a) at (0,0);
\coordinate[label={[xshift=-13.5mm,yshift=-3mm]\huge $v_2 = w$}] (b) at (3*0.602,3*0.798); 
\coordinate[label={[xshift=6mm,yshift=-3mm]\huge $v_3$}] (c) at (3+3*0.602,3*0.798);
\coordinate[label={[xshift=6mm,yshift=-3mm]\huge $v_1$}] (d) at (3,0);
\coordinate[label={[xshift=-4.5mm,yshift=-3mm]\huge $u$}] (e) at (9*0.602/5 +6/5, 9*0.798/5);
\coordinate[label={[xshift=-7.5mm,yshift=-3mm]\huge $P$}] (z) at (3*1.8*0.602,3*1.8*0.798);
\coordinate[label={[xshift=18.5mm,yshift=-2mm]\huge $P(1-y^{-1})$}] (y) at (3+0.6*3-0.6*3*0.602,-0.6*3*0.798);
\draw[postaction={decorate}] (a) -- (b);
\draw[postaction={decorate}] (b) -- (c);
\draw[postaction={decorate}] (d) -- (b);
\draw[postaction={decorate}] (d) -- (c);
\draw[postaction={decorate}] (a) -- (d);
\filldraw[black] (a) circle (3pt);
\filldraw[gray] (b) circle (3pt);
\filldraw[black] (c) circle (3pt);
\filldraw[black] (d) circle (3pt);
\filldraw[gray] (e) circle (3pt);

\coordinate (u) at (3*0.07*0.602,3*0.07*0.798);
\coordinate (v) at (3*0.07,0);
\pic [draw, "$y$", angle eccentricity=1.5] {angle = v--a--u};
\end{scope}
\end{tikzpicture}

\caption{
$u$ and $w$ are two distinct vertices in $T^*$, where $w$ is the vertex $v_2$ of the rhombus. $P$ is the leading monomial of $\psi_x(w)$ and $y$ is the angle between it and the base edge.
}
\label{fig: 5}
\end{figure}

\subsection{Thirteen Distances Suffice}\label{subs: wrap up degenerate}

We are now ready to present
the proof of Proposition~\ref{prop: 13 degenerate},
and thus conclude the proof of Theorem~\ref{thm: 13 degenerate}.

\begin{proof}[Proof of Proposition~\ref{prop: 13 degenerate}]
By proposition~\ref{prop: psi is polynomial embedding1} $\psi$ is a one-to-one maping. By Definition~\ref{def:deg poly emb} and Observation~\ref{obs: psi three edge lengths} $\psi$ is a $12$-degenerate polynomial embeding of every finite subgraph $G\subseteq T^*$.

To show that $\psi$ is a polynomial embedding, we will verify that for every vertex $v$ and every edge $(u,w)\in T^*$ we have that
\begin{align}
&\text{either $\psi_x(v)\in \{\psi_x(u)+k(\psi_x(w)-\psi_x(u))\ :\ k\in \Z\}$
for all $x$ in $\mathbb{T}^d$},\notag \\
&\text{or $f(x)=\frac{\psi_x(v)-\psi_x(u)}{\psi_x(w)-\psi_x(u)}$ is not a real function on $\T^d$,} \label{eq:final conditions}
\end{align}
which would imply that $\psi$ satisfies definition~\ref{def:poly emb}, by Observation~\ref{obs: reduction1}.

Every edge $e=(u,w)$ except the base edge of $T^*$ plays either the role of a $(v_0,v_2)$ edge, or a $(v_2,v_3)$ edge, or a $(v_1,v_3)$ edge or a $(v_1,v_2)$ edge of some rhombus $N$ in $H^*$. Denote by $P$ the leading monomial of $\psi_x(v_2)$ and $y=x_{\ty H}$.
Observe that
\begin{align*}
P & =\psi_x(\pi(N,v_2))-\psi_x(\pi(N,v_0)) =\psi_x(\pi(N,v_3))-\psi_x(\pi(N,v_1)),\\
P(1-y^{-1}) & =\psi_x(\pi(N,v_2))-\psi_x(\pi(N,v_1)).
\end{align*}

Proposition~\ref{prop: psi is polynomial embedding2} guarantees that \eqref{eq:final conditions} is satisfied in case that $(u,w)$ plays the role of either $(v_0,v_2)$, $(v_2,v_3)$ or $(v_1,v_3)$ in $H$.
While Proposition~\ref{prop: psi is polynomial embedding3} guarantees that \eqref{eq:final conditions} is satisfied in case that $(u,w)$ plays the role of $(v_1,v_2)$.
Thus $\psi$ is a $12$-polynomial embedding of every finite subgraph $G\subseteq T^*$.
By Observation~\ref{obs: almost every good polynomial1}, the set of $x\in\T^{12}$ such that $\psi(x)$ is a drawing is of full measure, and by Observation~\ref{obs: psi three edge lengths} each of these drawings uses only side lengths $\{|x_i-1|\}_{i\in I}$ and $1$. To see that this is the case for almost every choice of lengths in $[0,1]$,
let $a\in(0,1)$ and observe that the embedding
$a\cdot\psi$, i.e., the composition of a multiplication by $a$ with $\psi$, is
a drawing of $G$ for almost every $x$, using the side
lengths $a, \{a|1-x_i|\}_{i\in I}$. The proposition follows.
\end{proof}

\section{Open problem}
We conclude the paper with an open problem whose solution would naturally extend our work.

In \cite{Pvpd} the original paper of Carmi, Dujmovic, Morin and Wood
the authors have shown the following theorem.
\begin{uthm}
Let $G$ be a graph with $n$ vertices, maximum degree $\Delta$, and treewidth $k$. Then the distance number of $G$ is $O(\Delta k\log n)$.
\end{uthm}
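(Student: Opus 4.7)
The plan is to exploit the balanced-separator property of bounded-treewidth graphs in a divide-and-conquer recursion, spending $O(\Delta k)$ fresh edge lengths at each of the $O(\log n)$ recursion levels. I would start from the classical fact that any graph of treewidth $k$ admits a $(2/3)$-balanced separator $S$ with $|S|\le k+1$: after removing $S$, the remaining vertices can be partitioned into two subsets $A$, $B$, each of size at most $2n/3$, with no edges of $G$ between them.

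First, I would construct the drawing recursively. Having obtained a balanced separator $S$ for $G$, apply the procedure to $G[A\cup S]$ and $G[B\cup S]$, obtaining drawings in two widely separated, horizontally translated copies of a common canvas, with the shared copy of $S$ placed on a designated short segment on which both subdrawings agree. The only edges not yet drawn are those joining $S$ to $A\cup B$, of which there are at most $\Delta\cdot|S|=O(\Delta k)$; these can be realized using $O(\Delta k)$ new edge lengths, chosen so that each edge reaches its endpoint from the appropriate vertex of $S$.

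Next I would verify the length counting. Writing $L(n)$ for the number of distinct lengths used by the construction on an $n$-vertex graph, the recurrence is $L(n)\le L(2n/3)+O(\Delta k)$, because the two recursive calls can share length classes: the subdrawings live in disjoint regions, so the same numerical length used on each side does not create a conflict. This recurrence solves to $L(n)=O(\Delta k\log n)$.

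The main obstacle is ensuring a genuine drawing rather than merely a degenerate drawing: we must avoid both vertex-vertex collisions and incidences between a vertex and the interior of a non-adjacent edge. For a fixed horizontal separation between the two subdrawings, the forbidden configurations at the current recursion level are cut out by finitely many polynomial equations in the parameters describing the $O(\Delta k)$ new edge directions and lengths, and each such equation is non-degenerate because the separator is placed on a segment distinct from the deep subdrawings. Thus the bad set is a proper algebraic subvariety of the parameter space, and a generic perturbation avoids all such configurations; iterating this argument down the recursion tree yields the required drawing.
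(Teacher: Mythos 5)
First, a remark on scope: the paper you are reading does not prove this statement at all. It is quoted verbatim from Carmi, Dujmovi\'c, Morin and Wood \cite{Pvpd} in the ``Open problem'' section purely as motivation, so there is no in-paper proof to compare against; your argument has to stand on its own. The combinatorial skeleton you chose (balanced separators of size at most $k+1$ for treewidth-$k$ graphs, recursion depth $O(\log n)$, at most $\Delta(k+1)$ separator-incident edges per recursion node) is the natural starting point, and the genericity argument for avoiding vertex--edge incidences is sound in spirit.

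There is, however, a genuine gap exactly where the theorem becomes non-trivial. The accounting $L(n)\le L(2n/3)+O(\Delta k)$ silently assumes that all $2^i$ subproblems at recursion level $i$ realize their separator-incident edges using one \emph{common} palette of $O(\Delta k)$ lengths. You justify sharing by saying the subdrawings live in disjoint regions, but disjointness only prevents geometric conflicts; it does not make two unrelated subproblems produce the same numerical lengths. If each separator edge simply receives whatever length it happens to acquire, the distinct lengths accumulate over the whole recursion tree and the bound degrades to $O(\Delta k\,n)$. If instead you prescribe the palette in advance, then a separator vertex $s$ whose $d\le\Delta$ neighbours in $A\cup B$ have already been placed by the recursive calls must lie on the common intersection of $d$ circles with fixed generic centres and prescribed radii, which is empty once $d\ge 3$; so the separator cannot be placed after the recursion, and placing it before the recursion forces the induction hypothesis to carry boundary constraints (prescribed positions or prescribed distances for up to $\Delta(k+1)$ vertices) that your construction neither states nor establishes. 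Your description of the recursion is also internally inconsistent: you recurse on $G[A\cup S]$ and $G[B\cup S]$ as ``widely separated'' translated copies, yet want $S$ drawn identically in both, and then declare the $S$-incident edges ``not yet drawn'' even though they are edges of the subgraphs you already recursed on. To make the $\log n$ factor honest you need a mechanism that coordinates the lengths globally across all subproblems of a given level --- for instance by working with an $O(k\log n)$-depth elimination structure (vertex ranking) of the treewidth-$k$ graph and assigning lengths by rank --- rather than a purely local generic-perturbation step; that coordinating device is the missing idea here.
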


The authors use this result to provide a similar
bound for the distance number of outerplanar graphs, as these have treewidth at most $2$.
Our work demonstrates that outerplanar graphs have, in fact bounded distance number. It is therefore natural to ask
if our result could be generalized to every graph with bounded degree and treewidth.

\begin{conjecture}
Let $k,\ell\in \N$,
The distance number of all
graphs with treewidth at most $k$ and degree at most $\ell$ is bounded by $\dn(k,\ell)$,
where $\dn(k,\ell)$ is a universal constant depending only on $k$ and $\ell$.
\end{conjecture}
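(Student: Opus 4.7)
The plan is to generalize the four-step strategy behind Theorem~\ref{thm: 13 degenerate} from outerplanar graphs ($k=2$) to arbitrary $k$, with the bounded-degree hypothesis playing the role of controlling the combinatorial complexity of the construction. Every graph of treewidth at most $k$ is a subgraph of a $k$-tree, built by iteratively gluing $(k+1)$-cliques along external $k$-faces; let $T_k^*$ denote the universal $k$-tree formed by allowing every $k$-face to be glued to arbitrarily many new cliques. The conjecture then reduces to drawing bounded-degree subgraphs of $T_k^*$ with uniformly many distinct edge lengths. Next, one replaces the rhombus $H$ by the $k$-dimensional analogue $H_k$ consisting of two $(k+1)$-cliques glued along a $k$-face, and defines an analogous polytope-tree $H_k^*$ covering $T_k^*$ via a map $\pi_k$. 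Each node would receive a QR-like encoding and a type drawn from a finite alphabet whose size depends only on $k$ and $\ell$, and one defines a polynomial embedding $\psi^{(k)}$ by placing each polytope in $\C$ as a rotated and translated copy of a fixed embedding of $H_k$ parametrised by its type.

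The heart of the argument is to verify that $\psi^{(k)}$ is a polynomial embedding in the sense of Definition~\ref{def:poly emb}. By Observation~\ref{obs: reduction1}, it suffices to prove that for any non-incident vertex $v$ and edge $(u,w)$ the rational function $(\psi_x(v)-\psi_x(w))/(\psi_x(u)-\psi_x(w))$ fails to be real on the torus. Via Propositions~\ref{propos: palindromicity} and~\ref{prop:simetric factors B}, this becomes a non-palindromicity claim on certain Laurent polynomials. The type function must then be engineered so that the ascending structure of $\psi^{(k)}(v)$, extending Proposition~\ref{prop: 0-1-2 climbing} and the machinery of Proposition~\ref{prop:x123-polynom-b}, forbids non-trivial palindromic symmetries between vertex and edge polynomials, except in the geometrically trivial cases in which $v$ lies in a common polytope with $(u,w)$ along a forward chain. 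The conclusion of Proposition~\ref{prop: 13 degenerate} would then transfer to the bounded-degree subgraph setting through the same wrap-up as in Section~\ref{subs: wrap up degenerate}, with the number of edge lengths bounded by a function $\dn(k,\ell)$.

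The hard part will be twofold. First, for $k\ge 3$ a $(k+1)$-clique in $\C$ is over-determined by its pairwise distances: it has only $2(k+1)-4$ real geometric degrees of freedom yet $\binom{k+1}{2}$ edges, so one cannot freely prescribe all edge lengths in each clique and must instead select a consistent subset of free parameters across the tree. It is at this point that the bounded-degree hypothesis must enter, to guarantee that such a consistent selection exists at every vertex of $T_k^*$ meeting a subgraph of maximum degree $\ell$. Second, the ascending-polynomial combinatorics of Section~\ref{subs: psi poly ascend} depends in an essential way on the one-dimensional shape space of a rhombus (a single angle), whereas for higher $k$ the shape space is richer and the analogue of Proposition~\ref{prop:x123-polynom-b} requires a genuinely new combinatorial argument---most plausibly one that uses bounded degree to limit the number of distinct monomial \emph{directions} that may appear at each depth of the recursion. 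A weaker, non-constructive bound might instead be obtainable by a compactness or ultrafilter argument applied to an infinite-variable polynomial embedding, but such an approach is unlikely to reproduce the explicit edge-length structure that a direct generalization would yield.
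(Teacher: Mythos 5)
The statement you are addressing is not proved in the paper: it is posed explicitly as an open conjecture in the final section, so there is no proof of the authors' to compare yours against. What you have written is a research programme rather than a proof, and you say as much yourself --- the two ``hard parts'' you list at the end are precisely the mathematical content of the conjecture, and neither is resolved in your text. The first gap is fatal as it stands: for $k\ge 3$ a $(k+1)$-clique in the plane has $\binom{k+1}{2}$ edges but only $2(k+1)-4$ shape degrees of freedom, so its edge lengths satisfy nontrivial algebraic (Cayley--Menger) relations. The entire engine of the paper --- each gluing step multiplying the direction monomial $P_i$ by a single free unimodular variable $x_{\ty}$, the resulting ascending-polynomial structure, and the palindromicity criteria --- depends on the rhombus having a one-parameter shape space with all four sides of length $1$. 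You give no mechanism for assigning free torus-valued types to higher cliques compatibly with these rigidity constraints, and the assertion that ``the bounded-degree hypothesis must enter'' here is not an argument: you never specify how $\ell$ restricts the construction, and the universal $k$-tree $T_k^*$ you propose to draw has unbounded degree, so bounding the degree of the \emph{subgraph} does not by itself let you draw the \emph{cover} (the outerplanar proof draws all of $T^*$, which is also not locally finite, and never uses a degree bound).

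The second acknowledged gap is equally essential: the analogue of the ascending-polynomial combinatorics and of the proposition stating that a central palindromic quotient of ascending polynomials must be constant is what actually rules out vertex--edge incidences, and you concede that a ``genuinely new combinatorial argument'' would be needed. Finally, the fallback you mention --- a compactness or ultrafilter argument applied to an infinite-variable embedding --- does not obviously produce a finite bound on the number of distinct edge lengths, since the distance number is not continuous in any topology in which the space of embeddings is compact. In short, the conjecture remains open, and your outline, while a sensible starting point that correctly identifies where the outerplanar argument breaks, does not constitute a proof.
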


Naturally, it would also be of interest to find other natural families of graphs with bounded distance number.

\section{Acknowledgment}

We would like to thank an anonymous referee for useful remarks which significantly improved the presentation of our results and their context.

\end{document}